\def\bw {\mathbf{w}}
\def\btheta {\mathbf{\theta}}
\def\bomg {\boldsymbol{\omega}}
\def\bbR {\mathbb{R}}
\def\cE {\mathcal{E}}
\def\cI {\mathcal{I}}
\def\cL {\mathcal{L}}
\def\eps {{\epsilon}}
\def\om {{\omega}}
\def\bom{{\mathbf{\omega}}}
\newcommand{\ba}{\begin{aligned}}
\newcommand{\ea}{\end{aligned}}
\newcommand{\be}{\begin{equation}}
\newcommand{\ee}{\end{equation}}
\newtheorem{Thm}{Theorem}[section]
\newtheorem{Rmk}[Thm]{Remark}
\title{A Deep Learning Based  Discontinuous Galerkin Method for Hyperbolic Equations with Discontinuous Solutions and Random Uncertainties}
\author[a]{Jingrun Chen \thanks{jingrunchen@suda.edu.cn}}
\affil[a]{School of Mathematical Sciences and Mathematical Center for Interdisciplinary Research, Soochow University, Suzhou, 215006, China}
\author[b]{Shi Jin \thanks{shijin-m@sjtu.edu.cn}}
\affil[b]{School of Mathematical Sciences, Institute of Natural Sciences, and MOE-LSC, Shanghai Jiao Tong University, Shanghai, 200240, China}
\author[c]{Liyao Lyu\thanks{lyuliyao@msu.edu}}
\affil[c]{Department of Computational Mathematics, Science, and Engineering, Michigan State University, East Lansing, MI, 48824, USA}
\begin{document}
\maketitle

\begin{abstract}

We propose a deep learning based discontinuous Galerkin method (D2GM) to solve hyperbolic equations with discontinuous solutions and random uncertainties. The main computational challenges for such problems include discontinuities of the solutions and the curse of dimensionality due to uncertainties. Deep learning techniques have been favored for
high-dimensional problems but face difficulties when the solution is not smooth, thus have so far been mainly used for
viscous hyperbolic system that admits only smooth solutions.  We alleviate this difficulty by setting up the loss function using discrete shock capturing schemes--the discontinous Galerkin method as an example--since the solutions are smooth in the discrete space.  The convergence of D2GM is established via the Lax equivalence theorem kind of argument. The high-dimensional random space
is handled by the Monte-Carlo method. Such a setup makes the D2GM approximate high-dimensional functions over the random space with satisfactory accuracy at reasonable cost. The D2GM is found numerically to be first-order and second-order accurate for (stochastic) linear conservation law with smooth solutions using piecewise constant and piecewise linear basis functions, respectively. Numerous examples are given to verify the efficiency and the robustness of D2GM with the dimensionality of random variables up to $200$ for (stochastic) linear conservation law and (stochastic) Burgers' equation.
\end{abstract}

\section{Introduction}
Hyperbolic equations with discontinuous solutions in the physical space arise in problems such as fluid mechanics, combustion, nonlinear acoustics, gas dynamics, and traffic flow \cite{Dafermos, leveque2002finite}. One famous example is the compressible Euler equations in gas dynamics, which are  
 the compressible Navier-Stokes equations without viscosity and heat conductivity.  The inviscid equations develop discontinuous solutions, aka shocks, even if one starts from smooth initial data. Capturing shock waves has been
  an important subject in scientific computing and has been very successful \cite{MR3767234, leveque2002finite}.
 Meanwhile, in reality, one may need to consider
   many sources of uncertainties that can arise in these models. They may be due to the incomplete knowledge of the model, such as the empirical equations of state or constitutive relations, imprecise measurement of physical parameters, and inaccurate measurement of boundary and initial data. Therefore, it is highly desirable to develop computational methods that not only
   capture the singular profile of solutions in the physical space but also take random uncertainties  into account in the random space for high-fidelity simulations, along the line of uncertainty quantification (UQ) \cite{jin2018uncertainty}.

Due to the high dimensionality of the problems under study, it is natural to use  deep-learning based approaches, which  have been recently proposed for high-dimensional partial differential equations; see~\cite{weinan2018deep,sirignano2018dgm,meng2020ppinn,raissi2019physics,zhang2019quantifying,weinan2020machine,2020algorithms,zang2020weak,lyu2020mim,liang2021reproducing} for examples and references therein. In these methods, the basic idea is to use a deep neural network (DNN) as the trail function to approximate the solution based on global optimization of a suitably chosen  loss function. Specifically, the parameters in the DNN are optimized to make the DNN approximation satisfy the PDE and boundary/initial conditions as accurately as possible.  Quite good approximate solutions are obtained for problems with dimensionality about $100$. In all these methods, the loss function involves the (possibly higher-order) derivatives of the PDE solution, which prevents their ability to solve problems with discontinuous solutions, such as the (inviscid) Burgers' equation and the compressible Euler equations, and hence one usually solves viscous problems in which the solutions
are smooth \cite{raissi2019physics}.

For hyperbolic equations with discontinuous solutions in the physical space, the discontinuous Galerkin (DG) method has been very popular~\cite{cockburn1989tvbo,cockburn1989tvbm,cockburn1990runge, klockner2011viscous,cockburn2012discontinuous}. The flexibility of using discontinuous basis functions makes the DG methods capable of solving equations with discontinuous solutions, such as shock waves. For such problems with uncertainties, the stochastic Galerkin (SG) method has been developed for PDEs with random coefficients \cite{barth2013uncertainty,xiu2010numerical}, such as stochastic conservation laws 
\cite{MR3493500,MR2501693, MR3674794}, stochastic Hamilton--Jacobi equation \cite{hu2015stochastic} and stochastic wave equation \cite{gottlieb2008galerkin, MR2673525}. Compared with the Monte-Carlo (MC) method, the SG method achieves the spectral accuracy given the sufficient regularity of the PDE solution in the random space. Even though the SG methods are  widely used for stochastic problems, their computational complexity grows exponentially with respect to the dimensionality of the random space. Therefore, when the dimensionality of the random space is large, the MC method is preferred.

In this work, we propose a deep learning based discontinuous Galerkin method (D2GM) to solve hyperbolic equations with discontinuous solutions and random uncertainties by combining the advantages of the DG method and DNNs. A key idea here is
that at the discrete level, the DG method as an example here, the solution is smooth although its continuous 
counterpart is not. Thus one can expect that DNN will train better than the ones using AutoGrad
in PyTorch or TensorFlow for time and/or spatial derivatives. We will give a convergence analysis for this DNN solution for the case of
1d upwind flux. The idea of taking advantage of the smoothing
effect of the  discrete derivatives  has been used previously for solving linear wave equations with
discontinuous uncertain coefficients \cite{jin2018discrete}. In the high-dimensional random space we use the MC
method. The proposed method has the following properties:
\begin{itemize}
	\item By using the DNN representation in both physical and random spaces, the D2GM can approximate the PDE solution well in high dimensions;
	\item By using the weak formulation and discontinuous element basis, the D2GM is able to approximate discontinuous PDE solutions with high accuracy;
	\item By using the mini-batch sampling with controllable number of samples, the D2GM overcomes the curse of dimensionality.
\end{itemize}
The rest of paper is organized as follows. In Section \ref{sec:D2GM}, the D2GM is proposed with details about the DNN, discontinuous element basis, loss function, boundary and initial conditions, and stochastic gradient descent method. A convergence analysis of D2GM (in 1D
and using the upwind flux) is provided in Section \ref{sec:convergence}. Numerical results with the dimensionality of random variables up to $200$ for (stochastic) linear conservation law and (stochastic) Burgers' equation are shown in Section \ref{sec:numerics}. Conclusions are drawn in Section \ref{sec:conclusion}.

\section{Deep learning based discontinuous Galerkin method}
\label{sec:D2GM}
In this section, we describe the D2GM in details. First, we introduce the construction of a DNN and build the discontinuous element space using the DNN.
The associated loss function based on the DG method is then proposed with the enforcement of boundary/initial conditions. The stochastic gradient descent method is employed to find the optimal solution.

\subsection{Deep neural network}
A DNN contains a series of layers, and each layer has several neurons linked to pre- and post- layer neurons. Neurons are connected with an affine transformation and a nonlinear activation function. Such a DNN can be viewed as a nonlinear approximation of the target function. Precisely, suppose that the DNN has $L$ layers, i.e., an input layer, $L-1$ hidden layers, and an output layer. The input layer takes $\boldsymbol{z}^0 = (t,\boldsymbol{x},\boldsymbol{\omega})$ as the input and the output layer gives $z^L=\mathcal{N}(t,\boldsymbol{x},\boldsymbol{\omega})$ as the output, where $t$ is the temporal variable, $\boldsymbol{x}$ is the spatial variable, and $\boldsymbol{\omega}$ is the random variable. The relation between the $l$-th layer and the $(l + 1)$-st layer $(l = 0, 1, ..., L-1)$ is given by
\be
\ba
\label{DNN}
& \boldsymbol{z}^0=(t, \boldsymbol{x},\boldsymbol{\omega})  \quad {\text {input}}
\\
&\mathbf{z}^{l+1}_k= \sigma_l(\bw_{k}^{l+1} \cdot \boldsymbol{z}^l+b_k^l), \quad  l=0, 1, \cdots L-1, \quad 1\le k\le m_{l+1}\,,
\\
& \mathcal{N}(t,\boldsymbol{x},\boldsymbol{\omega})= \bw^{L+1}z^L  \quad {\text {output}}
\ea
\ee
where $m_l$ is the number of neurons in the $l-$th layer ($m_L=1$),  $\sigma$ is the activation function. Some popular $\sigma$ includes the rectified linear unit (ReLU) function $\sigma(x)=\max(x,0)$ and the sigmoid function $\sigma(x)=1/(1+e^{-x})$.

Let $\btheta=(\theta_1, \cdots, \theta_J)$ include all $\bw_k^l$ and $b_k^l$, with $J$ the total number of coefficients in (\ref{DNN}), which are to be obtained by minimizing
the loss function, in order to match the DNN solution $\mathcal{N}(t,\boldsymbol{x},\boldsymbol{\omega})$ with the target function $u(t,\boldsymbol{x},\boldsymbol{\omega})$.

\subsection{Discontinuous element basis}

For brevity, we use the unit interval $[0,1]$ for demonstration. Denote
\begin{equation}
	0 = x_0 < x_{\frac{1}{2}} < x_1 <\cdots < x_{N-\frac{1}{2}}< x_N = 1,
\end{equation}
where $x_{i+\frac{1}{2}}$ is the middle point of the cell $I_i=[x_i,x_{i+1}]$.
 We also denote $\Delta x_i = x_{i+1}-x_{i}$ and $h = \max_i \Delta {x_i}$.
 For the uniform mesh, $h = \Delta x_{i} = \frac{1}{N}$. 
\begin{figure}[ht]
\centering
\begin{tikzpicture}

	\draw[->] (-0.2,0)->(10.2,0);
\foreach \x in {0,2,...,10}
{
    \draw[xshift=\x cm] (0,0) -- (0,0.2);
}; 
\foreach \x in {1,3,...,9}
{
    \draw[xshift=\x cm] (0,0) -- (0,0.1);
};  

\node[below] at(0,0){$x_0$};
\node[below] at(1,0){$x_{\frac{1}{2}}$};
\node[below] at(2,0){$x_1$};
\node[below] at(3,0){$x_\frac{3}{2}$};
\node[below] at(4,0){$x_2$};
\node[below] at(5,0){$\cdots$};
\node[below] at(6,0){$x_{N-2}$};
\node[below] at(7,0){$x_{N-\frac{3}{2}}$};
\node[below] at(8,0){$x_{N-1}$};
\node[below] at(9,0){$x_{N-\frac{1}{2}}$};
\node[below] (a) at(10,0){$x_N$};
\draw[cyan,domain=0:4,smooth] plot(\x,{sin(\x r)+0.5});
\draw[cyan,domain=6:10,smooth] plot(\x,{sin(\x r)+0.5});
\draw[cyan,domain=4:6,dashed,smooth] plot(\x,{sin(\x r)+0.5});
\draw[cyan,xshift = 10 cm, yshift = 1.5 cm] (-1,0) -- (-0.5,0) node at (0.3,0){$\mathcal{N}_\theta(x)$};
\draw[orange,xshift = 10 cm, yshift = 2 cm] (-1,0) -- (-0.5,0) node at (0.3,0){$\hat{u}_{h,\theta}$};
\foreach \x in {1,3}
{
\draw[orange,xshift = \x cm] (-1,{sin(\x r)+0.5}) -- (1,{sin(\x r)+0.5});
\draw[xshift = \x cm,dashed] (0,0) -- (0,{sin(\x r)+0.5});
\draw[xshift = \x cm,dashed] (-1,0) -- (-1,{sin(\x r)+0.5});
\draw[xshift = \x cm,dashed] (1,0) -- (1,{sin(\x r)+0.5});
}
\foreach \x in {7,9}
{
\draw[orange,xshift = \x cm] (-1,{sin(\x r)+0.5}) -- (1,{sin(\x r)+0.5});
\draw[xshift = \x cm,dashed] (0,0) -- (0,{sin(\x r)+0.5});
\draw[xshift = \x cm,dashed] (-1,0) -- (-1,{sin(\x r)+0.5});
\draw[xshift = \x cm,dashed] (1,0) -- (1,{sin(\x r)+0.5});
}

\end{tikzpicture}
\caption{Illustration of the discontinuous element space.}
\label{fig:element space}
\end{figure}
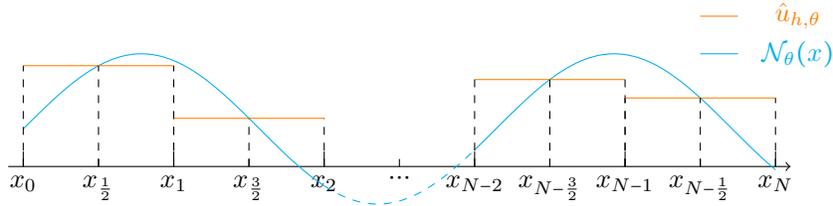

The discontinuous element space is defined as
 \begin{equation}
 	V_h^0 = \{v : v|_{I_i} \in P^0(I_i), \quad 0\leq i<N\},
 \end{equation}
 where $P^0$ denotes the $0$-th order polynomial (constant). We use a DNN to represent the element in $V_h^0$ as
 \begin{equation}
	u_{h,\theta}(x) = \mathcal{N}_{\theta}(x_{i+\frac{1}{2}}),  \quad \text{if } x_i\leq x<x_{i+1},
\end{equation}
where $\theta$ is the parameter set in the DNN to be optimized. This can also be expressed in a way more like the Galerkin formulation 
\begin{equation}\label{dnn-0}
	u_{h,\theta}(x) = \sum_{i=0}^{N-1} \mathcal{N}_{\theta}(x_{i+\frac{1}{2}}) \varphi_i(x) , \quad \varphi_i(x) = \left\{\begin{matrix}
		1 & x_i \leq x < x_{i+1}\\
		0 & \text{otherwise}
	\end{matrix}\right..
\end{equation}
This procedure is illustrated in Figure \ref{fig:element space}. This definition can be generalized to the space of high-order piecewise polynomials
\begin{equation}
\label{equ:space}
V^K_h=\left\{v:v|_{I_i}\in P^K(I_i); 1\leq i<N\right\},
\end{equation}
and any element in the space can be represented by $K+1$ DNNs $\mathcal{N}_\theta^j, j=0,\cdots,K,$ as
\begin{equation}\label{dnn-k}
u_{h,\theta}(x) = \sum_{j=0}^K \sum_{i=0}^{N-1} \mathcal{N}_\theta^j (x_{i+\frac{1}{2}})\varphi^j_i(x),
\end{equation}
where $\varphi_i^j(x)$ is the $j-$th order Legendre polynomial defined in $I_i$.

In high dimensions, this definition of $V_h^0$ can be easily generalized 
\begin{equation}
\label{equ:coeff}
\begin{aligned}
	&u_{h,\theta}(\mathbf{x}) = \sum_{\boldsymbol{i}} \mathcal{N}_{\theta}(x_{\boldsymbol{i}+\frac{1}{2}}) \varphi_{\boldsymbol{i}}(\mathbf{x}) &
	\varphi_{\boldsymbol{i}}(\mathbf{x}) = \left\{\begin{matrix}
		1 & \mathbf{x} \in I_{\boldsymbol{i}}\\
		0 & \text{otherwise}
	\end{matrix}\right.,
\end{aligned}
\end{equation}
where $\boldsymbol{x}=(x^1,x^2,\cdots, x^d)\in \mathbb{R}^d$, $\boldsymbol{i} = (i_1,i_2,\cdots i_d)$ is a multi-dimensional index vector, $I_{\boldsymbol{i}} = [x^1_{i_1},x^1_{i_1+1})\times [x^2_{i_2},x^2_{i_2+1}) \times \cdots [x^d_{i_d},x^d_{i_d+1})$, and $x_{\boldsymbol{i}+\frac{1}{2}}$ represents the center of $I_{\boldsymbol{i}}$.
The generalization of $V_h^k$ to the high-dimensional case can be done in a similar manner.

\subsection{The DG method for hyperbolic  conservation law}
In this work, we consider the hyperbolic problem with random uncertainties of the following form
\begin{eqnarray}\label{equ:conservation law}
&	u_t +\nabla_{\boldsymbol{x}} \cdot \boldsymbol{f}(u) = 0  \\
&   u(0, \boldsymbol{x}, \boldsymbol{\omega})=u_0(\boldsymbol{x}, \boldsymbol{\omega})
\end{eqnarray}
defined for $(t,\boldsymbol{x},\boldsymbol{\omega})\in [0,T]\times D\times\Omega$. Here
$\bomg$ is a high-dimensional random variable representing uncertainties (or random inputs).
The solution $u=u(t,\boldsymbol{x},\boldsymbol{\omega})$ then depends on $\bomg$.

The semi-discrete DG method for solving \eqref{equ:conservation law} is defined as follows: Find the unique solution $u_h(t,\boldsymbol{x},\boldsymbol{\omega})\in V_h^k$, such that, for any test function $v_h\in V^k_h$ and all $0\leq i <N$, one has
\begin{multline}
\label{equ:semi-dis}
	\frac{\mathrm{d}}{\mathrm{d} t}\left( u_h(t,\boldsymbol{x},\boldsymbol{\omega}),v_h(\boldsymbol{x})\right)_{I_{\boldsymbol{i}}} - \left(\boldsymbol{f}(u_h(t,\boldsymbol{x},\boldsymbol{\omega})),\nabla v_h(\boldsymbol{x})\right)_{I_{\boldsymbol{i}}} \\
	+ \boldsymbol{f}(u_h(t,\boldsymbol{x},\boldsymbol{\omega}))\cdot\mathbf{n} v_h(\boldsymbol{x}))|_{\partial I_{\boldsymbol{i}}} = 0,
\end{multline}
where $\boldsymbol{n}$ is the outward unit normal vector along $\partial I_{\boldsymbol{i}}$.
In 1D, one has
\begin{equation*}
	\frac{\mathrm{d}}{\mathrm{d} t}\left( u_h(t,x,\boldsymbol{\omega}),v_h(x)\right)_{I_i} - \left(f(u_h(t,x,\boldsymbol{\omega})),v'_h(x)\right)_{I_i} +  \hat{f}_{i+1}v_h(x_{i+1}^-) - \hat{f}_{i}v_h(x_{i}^+) = 0,
\end{equation*}
where the one-sided limit is defined as 
\begin{equation*}
	v^\pm(x_j) = v(x_j^{\pm}) = \lim_{x\to x_j^\pm} v(x),
\end{equation*}
and the inner product is defined as 
\[
(a(t,x,\boldsymbol{\omega}),b(x))_{D} = \int_{D} a(t,x,\boldsymbol{\omega})\cdot b(x) \mathrm{d} x.
\] 
Here $\hat{f}_i$ is a numerical flux, which is a single-valued function defined at the interface $x_i$ and in general depends on the values of the numerical solution $u_h$ from both sides of the interface.
There are several choices to choose the flux and we use the upwind flux 
\begin{equation*}
\hat{f}^{\mathrm{upwind}}(u^-,u^+) = f(u^-)
\end{equation*}
and Godunov flux 
\begin{equation*}
	\hat{f}^{\mathrm{God}}(u^-,u^+) = \left\{
	\begin{matrix}
	&\min_{u^-\leq u\leq u^+} f(u),& \text{if } u^-<u^+\\
	&\max_{u^+\leq u\leq u^-} f(u),& \text{if } u^+<u^+\\
	\end{matrix}\right.	
\end{equation*}
in this work~\cite{chavent1989local}. In high dimensions, $\boldsymbol{f}(u_h(t,\boldsymbol{x},\boldsymbol{\omega}))$ in $(\boldsymbol{f}(u_h(t,\boldsymbol{x},\boldsymbol{\omega})),\mathbf{n} v_h(\boldsymbol{x}))|_{\partial I_{\boldsymbol{i}}}$ is replaced by the numerical flux on quadrature points.

The semi-discrete formulation \eqref{equ:semi-dis} includes the temporal derivative, which needs to be discretized. A simple idea is to use the AutoGrad in PyTorch or TensorFlow, which provides the temporal derivative automatically by back propagation. This kind of approach is widely used in solving PDEs with spatial derivatives evolved in the loss function \cite{weinan2018deep,sirignano2018dgm}. We also introduce the temporal discretization with steps $0=t_0<t_1<\cdots<t_N= T$ and $t_{n+1}-t_n= \Delta t$, and the semi-discrete formulation \eqref{equ:semi-dis} becomes
\begin{multline}
	\label{equ:weak form high dimension}
\left( \frac{u_h(t_{n+1},\boldsymbol{x},\boldsymbol{\omega})-u_h(t_n,\boldsymbol{x},\boldsymbol{\omega})}{ \Delta t},v_h(\boldsymbol{x})\right)_{I_{\boldsymbol{i}}} - \left(\boldsymbol{f}(u_h(t_n,\boldsymbol{x},\boldsymbol{\omega})),\nabla v_h(\boldsymbol{x})\right)_{I_{\boldsymbol{i}}}\\
	 + (\boldsymbol{f}(u_h(t,\boldsymbol{x},\boldsymbol{\omega})),\mathbf{n} v_h(\boldsymbol{x}))|_{\partial I_{\boldsymbol{i}}}  = 0. 
\end{multline}
In 1D, \eqref{equ:weak form high dimension} reduces to
\begin{multline}
\label{equ:weak form}
\left( \frac{u_h(t_{n+1},x,\boldsymbol{\omega})-u_h(t_n,x,\boldsymbol{\omega})}{\Delta t},v_h(x)\right)_{I_i} - \left(f(u_h(t_n,x,\boldsymbol{\omega})), v'_h(x)\right)_{I_i}\\
 +  \hat{f}_{i+1}v_h(x_{i+1}^-) - \hat{f}_{i}v_h(x_{i}^+) = 0. 
\end{multline}

Consider the DG approximation
$$u_{h,\theta}(t,x,\boldsymbol{\omega}) = \sum_{j'=0}^K \sum_{i'=1}^{N-1}  \mathcal{N}_\theta^{j'} (t,x_{i'+\frac{1}{2}},\boldsymbol{\omega})\varphi^{j'}_{i'}(x)\,.
$$
Substituting  this into \eqref{equ:weak form}, choosing $v_h(x)=\varphi^{j}_{i}(x)$, and using the orthogonality of the Legendre polynomials, we have
\begin{multline}\label{eqn:discrete}
L_{i,j,n}\triangleq\frac{\mathcal{N}^j_\theta(t_{n+1},x_{i+\frac{1}{2}},\boldsymbol{\omega})-\mathcal{N}^j_\theta(t_{n},x_{i+\frac{1}{2}},\boldsymbol{\omega})}{\Delta t} - \left(f(u_{h,\theta}(t_n,x,\boldsymbol{\omega})),\frac{\mathrm{d} \varphi^j_i(x)}{\mathrm{d} x}\right)_{I_i} \\
+  \hat{f}_{i+1}\varphi^j_i(x_{i+1}^-) - \hat{f}_{i} \varphi^j_i(x_{i}^+) = 0 	.
\end{multline}
The second term above can be further simplified when $f$ is specified. For example, for linear conservation law when $f(u) = u$,   
\begin{equation}
\label{equ:linear loss V2}
\left(f(u_{h,\theta}(t_n,x,\boldsymbol{\omega})),\frac{\mathrm{d} \varphi^j_i(x)}{\mathrm{d} x}\right)_{I_i} = \sum_{k=0}^K\mathcal{N}^k_\theta(t_{n},x_{i+\frac{1}{2}},\boldsymbol{\omega}) C_k^j,\quad j=0,\cdots, K,
\end{equation}
where $C_k^j = \left(\varphi^k_i(x),\frac{\mathrm{d} \varphi^j_i(x)}{\mathrm{d} x}\right)_{I_i}$, and for Burgers' equation when $f(u) = \frac{1}{2}u^2$,
$$
\left(f(u_{h,\theta}(t_n,x,\boldsymbol{\omega})),\frac{\mathrm{d} \varphi^j_i(x)}{\mathrm{d} x}\right)_{I_i} = \sum_{l=0}^K\sum_{l'=0}^K\mathcal{N}^l_\theta(t_{n},x_{i+\frac{1}{2}},\boldsymbol{\omega})\mathcal{N}^{l'}_\theta(t_{n},x_{i+\frac{1}{2}},\boldsymbol{\omega}) C_{l,l'}^j,
$$
where $C_{l,l'}^j = \left(\varphi^l_i(x)\varphi^{l'}_i(x),\frac{\mathrm{d} \varphi^j_i(x)}{\mathrm{d} x}\right)$. 
Therefore, the loss function  for the DNN is defined as the residual error of \eqref{eqn:discrete} in the $L^2$ sense
\begin{equation}
\label{equ:loss function}
	\mathcal{L}(\theta) = \left(h \,\Delta t \sum_{i,j,n} L^2_{i,n} \right)^{1/2},
\end{equation}
and the DNN solution $\mathcal{N}_\theta$ is the solution that minimizes the loss function:
\begin{equation}\label{mini}
  \min_\theta 	\mathcal{L}(\theta)\,.
\end{equation}
Note that in this model the random variable $\boldsymbol{\omega}$ is still continuous and no discretization is applied in the random space. In numerical experiments, we apply the MC method for the random variable.

\subsection{Boundary and initial conditions}
Boundary and initial conditions are important for the well-posedness of PDEs. In general, there are four kinds of boundary conditions:
\begin{itemize}
	\item Dirichlet boundary condition
	\begin{equation*}\label{equ:BC}
		u(\boldsymbol{x}, \bomg) = g(\boldsymbol{x}, \bomg) \quad \boldsymbol{x} \in \partial D.
	\end{equation*}
	\item Neumann boundary condition
		\begin{equation*}
		\frac{\partial u(\boldsymbol{x}, \bomg)}{\partial \boldsymbol{\nu} } = g(\boldsymbol{x}, \bomg) \quad \boldsymbol{x} \in \partial D,
	\end{equation*}
	where $\boldsymbol{\nu}$ is the outward unit normal vector along $\partial D$.
	\item Robin boundary condition
	\begin{equation*}
		u(\boldsymbol{x}, \bomg) + \frac{\partial u(\boldsymbol{x}, \bomg)}{\partial \boldsymbol{\nu}} = g(\boldsymbol{x}, \bomg) \quad \boldsymbol{x} \in \partial D.
	\end{equation*}
	\item Periodic boundary condition
	\begin{equation*}
		u(\boldsymbol{x}+L_i\boldsymbol{e}_i, \bomg) = u(\boldsymbol{x}, \bomg) \quad i=1,\cdots,d,\;\boldsymbol{x} \in D,
	\end{equation*}
	where $\boldsymbol{e}_i$ is the $i-$th standard unit vector and $L_i$ is the period along $\boldsymbol{e}_i$.
\end{itemize} 
There are a couple of ways to enforce boundary conditions. The most straightforward way is to add the penalty term into the loss function.  For example, the penalty term for Dirichlet boundary condition can be expressed as $\lambda\|u-g\|^2_{\partial D}$  with the penalty parameter $\lambda$. Another way is to build a DNN that satisfies the boundary condition exactly. For Dirichlet boundary condition, such a DNN can be constructed as
\begin{equation}
	u_{\theta}(t,\boldsymbol{x}, \bomg) =L(\boldsymbol{x}) \mathcal{N}_{\theta}(t,\boldsymbol{x}, \bomg) + G(\boldsymbol{x}, \bomg),
\end{equation}
where $L(\boldsymbol{x})$ is a distance function that takes $0$ on $\partial D$ and is strictly positive inside $D$, $\mathcal{N}_{\theta}(t,\boldsymbol{x})$ is the neural network, and $G(\boldsymbol{x})$ is a smooth extension of $g(\boldsymbol{x})$ and equals $g(\boldsymbol{x})$ on $\partial D$.

For the initial condition $u(0,\boldsymbol{x}, \bomg) = u_0(\boldsymbol{x}, \bomg)$, one can use
\begin{equation}\label{eqn:dnn-initial}
\begin{aligned}
	u_{\theta}(t,\boldsymbol{x}, \bomg) = t\mathcal{N}_{\theta}(t,\boldsymbol{x}, \bomg) + h(\boldsymbol{x}, \bomg).
	\end{aligned}
\end{equation}
 
In the current work, we can enforce the exact boundary condition on the numerical solution. For Neumann and periodic boundary conditions, we have
\begin{itemize}
\item Neumann (Reflecting) boundary condition	
	\begin{equation*}
		\begin{aligned}
			f_{0} & = f_{1},\\
			f_{N} & = f_{N-1}.\\
		\end{aligned}
	\end{equation*}
\item Periodic boundary condition
	\begin{equation}\label{equ:periodic}
		\begin{aligned}
			f_{0} & = f_{N-1},\\
			f_{N} & = f_{1}.\\
		\end{aligned}
	\end{equation}
\end{itemize}
This kind of approach only applies for a grid-based method. 

\subsection{Stochastic gradient descent method}
Stability and convergence of the DG method defined in Section \ref{sec:D2GM} has been studied thoroughly in classical numerical analysis \cite{cockburn2012discontinuous}. However, when it comes to the high dimension, the number of degrees of freedom (dofs) scales like $(1/h)^d$ with $d$ the dimensionality. Therefore, the classical method suffers from the curse of dimensionality. To overcome this difficulty, we apply the idea of stochastic gradient descend (SGD) method to evaluate the loss function \eqref{equ:loss function} by selecting mesh points randomly over the index set $i, j, k$ in each iteration with a fixed number of points. For the random variable, we also apply the MC method with a fixed number of points in the random space. Overall, the proposed method overcomes the curse of dimenisonality by design.
 
\section{Convergence}\label{sec:convergence}
The convergence of the DNN solution can be established through standard Lax equivalence theorem kind of augument: consistency and stability imply convergence.
We first state some preparation results which give consistency of the DNN approximation. The main reason that the DNN approximation (\ref{DNN}) works is because of the {\it universal approximation
theorem}, established in \cite{Cyb, Fun}.

To make the presentation simple and clear,  we consider the deterministic (no $\bomg$ dependence) equation \eqref{equ:conservation law} over $[0,T]\times[0,1]$ with periodic boundary condition and assume that $f\in C^1$ and $f'>0$, thus the upwind scheme on uniform mesh writes

\be
\ba
\label{upwind}
&\frac{U^{n+1}_i-U^n_i}{\Delta t} + \frac{f(U^n_i)-f(U^n_{i-1}) }{h} = 0\,, \qquad i=1, \cdots, I, \quad n=0, \cdots, N-1,
\\
& U^n_0=U^n_I,
\\
&U^0_i=u_0(x_i).
\ea
\ee

In this section we will provide a proof of the convergence of the deep neural network approximation, along the line of \cite{HJJL}.  
Consider $V(t,x)$, the solution to 
\be
\ba
\label{V-scheme}
& \frac{V(t+\Delta t, x)-V(t, x)}{\Delta t} + \frac{f(V(t, x))-f(V(t, x-h) }{h} = 0, \qquad  n\ge 0,\\
& V(t, 0)=V(t, 1),\\
& V(0, x)=u_0(x).
\ea
\ee
Without loss of generality assume $u_0(x)\in C^1(D)$, with $D=[0,1]$ (if not the case one can interpolate through $U_i^0$ to get
a $C^1$ function $V(0,x)$). For fixed $\Delta t$ and $h$, clearly (\ref{V-scheme}) implies that $V(t_n, x)\in C^1(D)$ for all $n\ge 0$, since $f\in C^1$.  

From the definition of $V$ clearly $V(t^n, x_i)=U_i^n$ for all $n\ge 0, 1\le i\le I$.

The loss function (\ref{equ:loss function}) is now
\begin{equation}\label{loss-1}
\mathcal{L}(\theta)=\left(h \, \Delta t \sum_{i=1}^I\sum_{n=0}^{N-1} \left|\frac{\mathcal{N}_{\theta}(t_n+\Delta t, x_i)-\mathcal{N}_{\theta}(t_n, x_i)}{\Delta t} + \frac{f(\mathcal{N}_{\theta}(t_n, x_i))-f(\mathcal{N}_{\theta}(t_n, x_i-h) }{h}\right|^2 \right)^{1/2}\,.
\end{equation}

Below we adopt the universal approximation theory to our setting.

\begin{Thm} \label{LiX} 
Let $\sigma$ be any non-polynomial function in $C^1(\bbR)$. Then for any $\delta>0$, there is a network (\ref{DNN}) such that
$$
\|V -  \mathcal{N}_{\theta}\|_{W^{1,\infty}(K)} < \delta\,.
$$
\end{Thm}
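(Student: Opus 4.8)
\emph{Proof strategy.} The statement is, at bottom, an instance of the universal approximation theorem in the $C^1$ (equivalently $W^{1,\infty}$) topology, so the plan is to reduce it to such a statement in two steps: first establish that the target $V$ is a $C^1$ function on the compact computational set $K:=[0,T]\times[0,1]$, and then invoke a simultaneous approximation theorem for a function together with its first derivatives.

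First I would pin down the regularity of $V$. For \emph{fixed} $\Delta t$ and $h$, \eqref{V-scheme} is an explicit recursion: given $V(t_n,\cdot)$ one sets
\[
V(t_{n+1},x)=V(t_n,x)-\frac{\Delta t}{h}\bigl(f(V(t_n,x))-f(V(t_n,x-h))\bigr),
\]
and $V$ on a slab $[t_n,t_{n+1}]\times[0,1]$ is recovered from $V(t_n,\cdot)$ and $V(t_{n+1},\cdot)$ through the interpolation implicit in \eqref{V-scheme} (affine in $t$). Since $u_0\in C^1(D)$ — or, failing that, after replacing it at the outset by a $C^1$ interpolant of the $U_i^0$, as the text already notes — and $f\in C^1$, every operation in the recursion (the shift $x\mapsto x-h$, composition with $f$, linear combinations) preserves $C^1$ regularity in $x$; combined with the $t$-interpolation and the periodicity in $x$, this gives $V\in C^1(K)$ with $V$ and $\grad_{t,x}V$ bounded and uniformly continuous on $K$. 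The constants here depend on $\Delta t,h,T,\|u_0\|_{C^1}$ and $f$, which is harmless since $\Delta t$ and $h$ are frozen throughout this section.

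Then I would invoke the $C^1$ form of the universal approximation theorem (Hornik; Leshno--Lin--Pinkus--Schocken; X.\ Li): if $\sigma\in C^1(\bbR)$ is non-polynomial, then for every $g\in C^1(K)$ with $K\subset\bbR^2$ compact and every $\delta>0$ there exist an integer $m$ and parameters $\{c_k,\bw_k,b_k\}_{k=1}^m$ such that the single-hidden-layer network $\sum_{k=1}^m c_k\,\sigma(\bw_k\cdot(t,x)+b_k)$ approximates $g$ within $\delta$ in $W^{1,\infty}(K)$, i.e.\ simultaneously in the sup norm and in the sup norm of each first partial derivative. Applied with $g=V$ this produces a network of the required accuracy, and a network with a single hidden layer is a special case of the architecture \eqref{DNN} (with a linear output layer), so the theorem follows.

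I do not expect a serious obstacle here: the proof is short once the correct ingredients are in place. The two points that need care are (i) fixing a convention for $V$ off the time grid and checking \emph{joint} (not merely $x$-wise) $C^1$ regularity on $K$, and (ii) citing the variant of the approximation theorem that controls derivatives, not just values. Point (ii) is the whole purpose of stating the result in $W^{1,\infty}$: $W^{1,\infty}$-closeness of $\mathcal{N}_\theta$ to $V$ immediately makes the discrete difference quotients appearing in the loss \eqref{loss-1} close as well, uniformly in $\Delta t$ and $h$, because $\bigl|\tfrac{w(t+\Delta t,x)-w(t,x)}{\Delta t}\bigr|\le\|\partial_t w\|_{L^\infty(K)}$ and similarly in $x$ for any $w\in C^1(K)$; this uniform bound is exactly the consistency input needed for the Lax-type argument that is to follow.
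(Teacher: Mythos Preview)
Your proposal is correct and matches the paper's approach: the paper does not prove this theorem at all but simply records it as an instance of the classical universal approximation theorem (the surrounding text cites Cybenko and Funahashi and says ``we adopt the universal approximation theory to our setting''), leaving implicit that the $C^1$/$W^{1,\infty}$ variant for non-polynomial $C^1$ activations is intended. Your write-up is in fact more careful than the paper on one point: the paper only argues that $V(t_n,\cdot)\in C^1(D)$ at each discrete time level and then tacitly assumes $V\in C^1([0,T]\times[0,1])$ in the next theorem, whereas you correctly note that a convention for $V$ off the time grid (such as affine interpolation in $t$) is needed before the joint $W^{1,\infty}$ approximation statement makes sense.
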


The next theorem establishes the consistency of the DNN approximation.

\begin{Thm}
\label{loss-conv}
Assume that the number of layers $L=2$ and that the solution $V$ to (\ref{V-scheme}) belongs to ${C}^1([0,T] \times [0,1])$, and the activation function
$\sigma(x) \in C^{2}$ is non-polynomial. Then for any $\delta>0$,  there exists $\theta$ and a sequence of the DNN solutions, denoted by $\mathcal{N}_{\theta}=\mathcal{N}(t,x; \theta)$,
such that when the number of parameters is sufficiently large, 
$$
|\cL [\mathcal N_{\theta}] | <C(T)  \delta
$$
for some positive constant $C(T)$ that may depend on $T$.
\end{Thm}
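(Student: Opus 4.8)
This is the Lax-equivalence consistency step, so the plan is to exploit that the loss \eqref{loss-1} annihilates the exact discrete solution. For a function $w(t,x)$ on $[0,T]\times[0,1]$ write
\[
L_{i,n}[w]\triangleq\frac{w(t_n+\Delta t,x_i)-w(t_n,x_i)}{\Delta t}+\frac{f(w(t_n,x_i))-f(w(t_n,x_i-h))}{h},
\]
so that $\mathcal{L}(\theta)^2=h\,\Delta t\sum_{i=1}^{I}\sum_{n=0}^{N-1}|L_{i,n}[\mathcal{N}_\theta]|^2$. By construction $V$ from \eqref{V-scheme} satisfies $L_{i,n}[V]=0$ for every $i,n$ (it is exactly \eqref{upwind} at $(t_n,x_i)$, since $V(t_n,x_i)=U_i^n$), hence $L_{i,n}[\mathcal{N}_\theta]=L_{i,n}[\mathcal{N}_\theta]-L_{i,n}[V]$. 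First I would fix the compact set $K=[0,T]\times[0,1]$, extend $V$ by $1$-periodicity in $x$ so that the ghost value $V(t,x_i-h)$ for $x_i<h$ is unambiguous, and apply Theorem \ref{LiX}: given $\delta>0$ there is a network of the form \eqref{DNN} with $L=2$ and $\|V-\mathcal{N}_\theta\|_{W^{1,\infty}(K)}<\delta$ (the hypothesis $\sigma\in C^2$ is more than enough, since $C^2\subset C^1$). Set $e\triangleq\mathcal{N}_\theta-V$, so that $\|e\|_{L^\infty(K)}$, $\|\partial_t e\|_{L^\infty(K)}$ and $\|\partial_x e\|_{L^\infty(K)}$ are all $<\delta$.

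The core of the argument is a pointwise bound $|L_{i,n}[\mathcal{N}_\theta]|\le C_0\delta$ with $C_0$ independent of $\delta$. The temporal difference is controlled by the fundamental theorem of calculus, $\frac{e(t_n+\Delta t,x_i)-e(t_n,x_i)}{\Delta t}=\frac1{\Delta t}\int_{t_n}^{t_n+\Delta t}\partial_t e(s,x_i)\dd s$, which is at most $\|\partial_t e\|_{L^\infty(K)}<\delta$ in absolute value and, crucially, carries no $1/\Delta t$ blow-up. For the flux difference I would use $f\in C^1$ to write $f(\mathcal{N}_\theta(t_n,x_i))-f(V(t_n,x_i))=g_i\,e(t_n,x_i)$ with $g_i\triangleq\int_0^1 f'(V(t_n,x_i)+s\,e(t_n,x_i))\dd s$, and similarly $g_{i-1}$ at the point $x_i-h$; since $V$ is bounded on $K$ and $|e|<\delta\le 1$, all arguments of $f'$ lie in a fixed compact interval, so $|g_i|\le M:=\sup|f'|$ and, by uniform continuity of $f'$, $|g_i-g_{i-1}|\le\omega_{f'}\big((\|\partial_x V\|_{L^\infty}+1)h\big)$ — a clean $\le\|f''\|_{L^\infty}(\|\partial_x V\|_{L^\infty}+1)h$ if one assumes $f\in C^2$. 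Writing
\[
\frac{f(\mathcal{N}_\theta(t_n,x_i))-f(\mathcal{N}_\theta(t_n,x_i-h))}{h}-\frac{f(V(t_n,x_i))-f(V(t_n,x_i-h))}{h}=g_i\,\frac{e(t_n,x_i)-e(t_n,x_i-h)}{h}+\frac{g_i-g_{i-1}}{h}\,e(t_n,x_i-h),
\]
the first term is bounded by $M\|\partial_x e\|_{L^\infty}\le M\delta$ and the second by $h^{-1}\omega_{f'}\big((\|\partial_x V\|_{L^\infty}+1)h\big)\,\delta$, a fixed multiple of $\delta$ for the fixed mesh size $h$. Hence $|L_{i,n}[\mathcal{N}_\theta]|\le C_0\delta$ with $C_0$ depending only on $f$, the $C^1$-norm of $V$, and $h$.

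To conclude I would sum over the grid: the $x$-mesh has $I=1/h$ cells and there are $N=T/\Delta t$ time levels, so
\[
\mathcal{L}(\theta)^2=h\,\Delta t\sum_{i=1}^{I}\sum_{n=0}^{N-1}|L_{i,n}[\mathcal{N}_\theta]|^2\le h\,\Delta t\cdot I\cdot N\cdot(C_0\delta)^2=T\,(C_0\delta)^2,
\]
whence $|\mathcal{L}(\theta)|\le\sqrt{T}\,C_0\,\delta=:C(T)\,\delta$. The constant $C(T)$ is thus $\sqrt{T}$ times the mesh/flux/regularity constant, the $T$-dependence entering only through this count of degrees of freedom.

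The step I expect to be the only delicate one is the flux difference: each difference quotient in $L_{i,n}$ carries a $1/\Delta t$ or $1/h$ prefactor, so working with the $C^0$ approximation bound alone would give the useless estimate $O(\delta/h)$. The remedy is precisely the $W^{1,\infty}$ statement of Theorem \ref{LiX}, which makes numerators such as $e(t_n,x_i)-e(t_n,x_i-h)$ of size $O(\|\partial_x e\|_{L^\infty}h)=O(\delta h)$ and so cancels the prefactor; the $g_i$-splitting above is the device that exposes this cancellation while differentiating $f$ only once. A secondary, purely bookkeeping, point is the periodic wrap-around of the ghost node $x_i-h$ for small $i$, which I would dispose of at the outset by approximating the $1$-periodic extension of $V$ over one full period.
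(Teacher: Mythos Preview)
Your proposal is correct and follows essentially the same approach as the paper's proof: subtract the exact discrete solution $V$ (which annihilates the residual), invoke the $W^{1,\infty}$ universal approximation theorem, bound the residual pointwise by $C\delta$, and sum over the grid. The only difference is that you supply a detailed estimate for the flux difference via the $g_i$-splitting, whereas the paper simply asserts that $\mathcal{I}_n$ ``can obviously be bounded by $\delta$ multiplied by a constant $C$'' and moves directly to the sum; your more careful treatment (and your correct final bound $\sqrt{T}\,C_0\,\delta$) is a strict improvement in rigor over the paper's sketch.
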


\begin{proof}
By \eqref{V-scheme},
\be
\ba
\cI_n(x,\theta)= & \frac{\mathcal N_{\theta}(t+\Delta t, x; \theta)-\mathcal N_{\theta}(t, x; \theta)}{\Delta t} + \frac{f(\mathcal N_{\theta}(t, x; \theta))-f(\mathcal N_{\theta}(t, x- h; \theta) }{h}
\\
& =\frac{[\mathcal N_{\theta}(t+\Delta t, x; \theta)-V(t+\Delta t, x)]-[\mathcal N_{\theta}(t, x; \theta)-V(t, x)}{\Delta t} \\
& \quad + \frac{[f(\mathcal N_{\theta}(t, x; \theta))-f(V(t, x))]-[f(\mathcal N_{\theta}(t, x-h; \theta)) - f(V(t, x-h))]}{h} \,.
\ea
\ee
Given any $\delta$, by Theorem \ref{LiX}, for $J$ sufficiently large, $\cI_n(x,\theta)$  can obviously be bounded by $\delta$ multiplied by a constant $C$ uniformly in $x$ and $n$, namely
\be
\| \cI_n(\cdot, \theta) \|_{l^\infty} \le C \delta\,.
\ee
Thus, by the Cauchy-Scharwtz inequality and the boundedness of $D$, the loss function in \eqref{loss-1} can be bounded by $\delta$ multiplied by a constant that depends on  $|D|$ and $T$ as
\begin{equation}
\mathcal{L}(\theta)=\left(h \, \Delta t \sum_{i,n} \left|\mathcal{I}_n\right|^2 \right)^{1/2}
\le C h\,IN  \Delta t\,\delta\le CT \delta
\end{equation}
since $Ih=1$ and $n \, \Delta t \le T$.
\end{proof}

The above theorem shows that one can find the parameter $\theta$ such that the loss function convergences to zero. This shows the {\it consistency} of the DNN approximation. In fact the loss function $\cL$ 
can be viewed as the truncation error of the DNN approximation, which will be made clear in the proof of Theorem \ref{Conv}.  Note that Theorem \ref{loss-conv}  does not imply that $\mathcal N $ converges to the solution of
the original problem (\ref{equ:conservation law}). Next we prove the convergence of the DNN approximation, based on the {\it stability} argument.

\begin{Thm}
\label{Conv}
Let $\theta_J$ be the sequence defined in Theorem \ref{loss-conv}, and let $\mathcal N_{\theta}$ be the solution to \eqref{mini} and $V$ be the classical numerical solution to \eqref{V-scheme}, then
$$
\| \mathcal{N}_\theta(t_n,\cdot; \theta_J)-V(\cdot)\| \le \| \mathcal{N}_\theta(0,\cdot; \theta)-u(0, \cdot)\| + C(T)\delta \,. 
$$
Consequently,
$$
| \mathcal{N}_\theta(t_n, x_i; \theta_J)-U_j^n\| \le \| \mathcal{N}_\theta(0,\cdot; \theta)-u(0, \cdot)\| + C(T)\delta \,,
\quad {\hbox {for}}\quad 1\le i\le I, \quad n>0\,.
$$
\end{Thm}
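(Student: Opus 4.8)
The plan is to follow the Lax-equivalence route announced before the statement: Theorem~\ref{loss-conv} already supplies \emph{consistency} (the competitor $\theta_J$ makes the loss $O(\delta)$), so all that remains is an a~priori \emph{stability} estimate for the upwind scheme, applied to the difference between the trained network and the reference grid solution. Since $\theta$ is a minimizer of \eqref{mini} and $\theta_J$ is an admissible competitor, $\mathcal{L}(\theta)\le\mathcal{L}(\theta_J)\le C(T)\delta$ by Theorem~\ref{loss-conv}; I would then work throughout with nodal values, for which $V(t_n,x_i)=U_i^n$, and measure the error in the discrete $L^1$ norm $\|w\|=h\sum_{i=1}^{I}|w_i|$, which is the natural norm in which a monotone scheme with a merely $C^1$ flux is rigorously stable.

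First I would record the two nodal relations. By \eqref{upwind}, $\frac{U_i^{n+1}-U_i^n}{\Delta t}+\frac{f(U_i^n)-f(U_{i-1}^n)}{h}=0$ exactly, whereas the minimizer $\mathcal{N}_\theta$ satisfies the same relation up to a residual, $\frac{\mathcal{N}_\theta(t_{n+1},x_i)-\mathcal{N}_\theta(t_n,x_i)}{\Delta t}+\frac{f(\mathcal{N}_\theta(t_n,x_i))-f(\mathcal{N}_\theta(t_n,x_{i-1}))}{h}=\rho_i^n$, where, by the very definition \eqref{equ:loss function} of the loss, $\bigl(h\,\Delta t\sum_{i,n}|\rho_i^n|^2\bigr)^{1/2}=\mathcal{L}(\theta)\le C(T)\delta$. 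Subtracting and setting $e_i^n=\mathcal{N}_\theta(t_n,x_i)-U_i^n$, I would use the mean value theorem ($f\in C^1$) to write $f(\mathcal{N}_\theta(t_n,x_i))-f(U_i^n)=a_i^n e_i^n$ with $a_i^n=f'(\xi_i^n)$ and $0<a_i^n\le M:=\sup|f'|$ over the (fixed, bounded) range of nodal values. The error then obeys the variable-coefficient upwind recursion $e_i^{n+1}=(1-\lambda a_i^n)e_i^n+\lambda a_{i-1}^n e_{i-1}^n+\Delta t\,\rho_i^n$ with $\lambda=\Delta t/h$.

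Under the CFL condition $\lambda M\le 1$, both coefficients $1-\lambda a_i^n$ and $\lambda a_{i-1}^n$ are nonnegative, so taking absolute values, summing over $i$, and reindexing the shifted term by the periodic identification $e_{i-1}^n$ (which also permutes $a_{i-1}^n$), the flux coefficients telescope: $\sum_i|e_i^{n+1}|\le\sum_i\bigl[(1-\lambda a_i^n)+\lambda a_i^n\bigr]|e_i^n|+\Delta t\sum_i|\rho_i^n|=\sum_i|e_i^n|+\Delta t\sum_i|\rho_i^n|$. Iterating in $n$ and multiplying by $h$ gives $\|e^n\|\le\|e^0\|+h\,\Delta t\sum_{m<n}\sum_i|\rho_i^m|$, and one Cauchy--Schwarz step in the space-time index converts the accumulated $L^1$ residual into the $L^2$ loss, $h\,\Delta t\sum_{m,i}|\rho_i^m|\le\sqrt{h\,\Delta t\,IN}\,\bigl(h\,\Delta t\sum_{m,i}|\rho_i^m|^2\bigr)^{1/2}=\sqrt{T}\,\mathcal{L}(\theta)\le\sqrt{T}\,C(T)\delta$, using $hI=1$ and $N\Delta t=T$. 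Since $\|e^0\|=h\sum_i|\mathcal{N}_\theta(0,x_i)-u_0(x_i)|$ is a quadrature of $\|\mathcal{N}_\theta(0,\cdot)-u(0,\cdot)\|$ (and is zero outright if the initial data is built in via \eqref{eqn:dnn-initial}), absorbing $\sqrt{T}$ into the constant yields $\|\mathcal{N}_\theta(t_n,\cdot)-V(\cdot)\|\le\|\mathcal{N}_\theta(0,\cdot)-u(0,\cdot)\|+C(T)\delta$; the pointwise corollary is immediate since each $|\mathcal{N}_\theta(t_n,x_i)-U_i^n|$ is a summand on the left.

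The step I expect to be the genuine obstacle is justifying that $\mathcal{N}_\theta$ takes values in a fixed bounded interval --- which is what legitimizes the linearization $a_i^n=f'(\xi_i^n)$ and the bound $a_i^n\le M$. The grid solution $V$ enjoys a discrete maximum principle because \eqref{upwind} is monotone under CFL, but the minimizer of a least-squares functional carries no such structure a priori; one must either trap $\mathcal{N}_\theta$ near the bounded function $V$ via the $W^{1,\infty}$-approximation statement (Theorems~\ref{LiX}--\ref{loss-conv}) or run an $\ell^\infty$ bootstrap in tandem with the $\ell^1$ estimate. A secondary, purely bookkeeping point is that the residual is accumulated over $N=T/\Delta t$ time levels: the $h\,\Delta t$ weighting in the loss \eqref{equ:loss function} is precisely what the Cauchy--Schwarz step needs so that this accumulation costs only a factor $\sqrt{T}$ rather than a negative power of $\Delta t$.
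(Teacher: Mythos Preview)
Your argument is essentially the same as the paper's: derive the error recursion $e_i^{n+1}=(1-\lambda a_i^n)e_i^n+\lambda a_{i-1}^n e_{i-1}^n+\Delta t\,\rho_i^n$ via the mean value theorem, use the CFL condition and periodicity to get $l^1$ contraction, and iterate. The only minor difference is that the paper bounds the residual directly in $l^\infty$ (via the estimate $\|\cI_n\|_{l^\infty}\le C\delta$ from Theorem~\ref{loss-conv}) rather than accumulating in $l^1$ and applying Cauchy--Schwarz to recover the $L^2$ loss; your route is slightly more honest in that it ties the bound to the actual minimized quantity $\mathcal{L}(\theta)$, and your flagging of the boundedness issue for $a_i^n$ is a point the paper simply assumes away.
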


\begin{proof}
Let $\cE_i^n(\theta)=\mathcal{N}_\theta(t_n,x_i; \theta_J)- U_j^n=\mathcal{N}_\theta(t_n,x_i; \theta_J)- V(t^n,x_i)=\mathcal{N}_i^n-Vi^n$.  Clearly, one has
$$
\frac{\cE^{n+1}_i-\cE^{n}_i}{\Delta t} + \frac{f(\mathcal{N}_i^n)-f(\mathcal{N}^n_{i-1}) }{h}- \frac{f(u^n_i)-f(u^n_{i-1}) }{h}=\cI_n(x_i, \theta).
$$
Let $\lambda = \Delta t/h$. Thus
\be
\ba
\cE^{n+1}_i&=\cE^{n}_i+ \lambda \left[ (f(u^n_i)-f(\mathcal{N}_i^n))-(f(u^n_{i-1})-f(\mathcal{N}^n_{i-1})) \right]+\Delta t \cI_n(x_i, \theta) \\
&=\cE^{n}_i+ \lambda \left[ -f'(\eta^n_i)\cE_j^n+f'(\eta^n_{i-1})\cE_{i-1}^n\right] +\Delta t\cI_n(x_i, \theta) \\
&= (1-\lambda f'(\eta^n_i))\cE_i^n + \lambda f'(\eta^n_{i-1})\cE_{i-1}^n +\Delta t\cI_n(x_i, \theta), 
\ea
\ee
where $\eta_j^n$ is a point between $\mathcal{N}_i^n$ and $u_j^n$. Now taking the $l^1$ norm on the above equality,  assuming 
$$
  \lambda\, sup_\eta f'(\eta) \le 1,
$$
and using the periodic boundary condition, one gets, for all $n\ge 0$,
\be
\ba
\|\cE^{n+1}\|_{l^1}= 
 &\frac{1}{I} \sum_{i=1}^I|\cE_i^{n+1}| \\
&\le  \frac{1}{I}\sum_{i=1}^I (1-\lambda f'(\eta_i^n)) |\cE_i^n|
                + \frac{1}{I}\sum_{i=1}^I \lambda f'(\eta_{i-1}^n) |\cE_{i-1}^n|+ \Delta t \|\cI_n(\cdot, \theta)\|_{l^1}\\
&\le  \frac{1}{I}\sum_{i=1}^I (1-\lambda f'(\eta_i^n)) |\cE_i^n|
                + \frac{1}{I}\sum_{i=1}^I \lambda f'(\eta_{i}^n) |\cE_{i}^n|+ \Delta t \|\cI_n(\cdot, \theta)\|_{l^1}\\
&= \|\cE^n\|_{l^1} + C \delta \Delta t.
\ea
\ee
Consequently one has
\be
\|\cE^{n}\|_{l^1} \le \|\cE^{0}\|_{l^1}+ C \delta n \Delta t 
\le \|\cE^{0}\|_{l^1}+  CT \delta  
\ee
for all $n$ such that $n \Delta t \le T$. Now the convergence $\mathcal{N}\to V$ as $J\to \infty$ is a consequence of Theorem \ref{loss-conv}, as long as one trains the initial data $\cE^{0}$ well.
\end{proof}

\begin{Rmk}
It is straightforward to establish the convergence between the numerical solution $V$ and the exact solution $u(t,x)$ by combining classical numerical analysis of the DG method \cite{cockburn2012discontinuous}. This, together with Theorem \ref{Conv}, leads to the convergence of the DNN solution $\mathcal{N}_{\theta}$ to the exact solution $u(t,x)$. The proof also implies that both the DNN approximation error and the discretization error contribute to the approximation error in the D2GM, as demonstrated in Section \ref{sec:numerics}.
\end{Rmk}

\begin{Rmk}
The convergence analysis can be generalized to high-dimensional problems with random variables. In these cases, the MC method is employed to sample the random variables or a subset of indices for the spatial variables. The sampling error is inversely proportional to the square root of the number of samples, which attributes to the convergence of the loss function proven in Theorem \ref{loss-conv} while the other parts of the convergence proof remains unchanged.
In practice, the sampling error contributes to the total approximation error and is kept small by using a large number of samples.
\end{Rmk}

\section{Numerical results}
\label{sec:numerics}
There are four sources of error in the D2GM: the DNN approximation error, the discretization error, the optimization error, and the sampling error. A large number of samples are used so that the sampling error will not affect the observation numerically. For the optimization error, Adam (Adaptive moment method) is used to find the optimal solution. Therefore, the first two sources of error dominates the numerical performance of the D2GM. For a DNN with the large number of parameters, the DNN approximation error is small and the discretization error dominates. Therefore, for moderate grid size, the convergence rate of D2GM is observed in the classical sense. When the grid size is small, the DNN approximation contributes more to the total error and the convergence rate of DG will be lost. The convergence rate will be recovered if a DNN with more parameters is employed.

 \subsection{Linear conservation law}
 Consider 
 \begin{equation}\label{eqn:linear}
 \left\{
 \begin{aligned}
 	 &2d\pi u_t - \sum_{i=1}^d u_{x_i} = 0 & x\in [0,1]^d\\
 	 &u(0,x) = h(x) = \sin( 2 \pi \sum_{k=1}^d x^k)
 \end{aligned}\right.
 \end{equation}
 with periodic boundary condition, and the exact solution  $u(t,x) = \sin(t + 2 \pi \sum_{k=1}^d x^k)$, $d = 1,2,3$. For the first-order method, following \eqref{eqn:dnn-initial}, we construct the  numerical solution that satisfies the initial condition exactly
\begin{equation}
\label{equ:construction}
\begin{aligned}
	&u_{h,\theta}(t,\boldsymbol{x}) = \sum_{\boldsymbol{i}} [t\mathcal{N}_\theta(t,x_{\boldsymbol{i}+\frac{1}{2}}) + g(x_{\boldsymbol{i}+\frac{1}{2}} )]\varphi_{\boldsymbol{i}}(\boldsymbol{x}) &
	\varphi_{\boldsymbol{i}}(\boldsymbol{x}) = \left\{\begin{matrix}
		1 & \boldsymbol{x} \in I_{\boldsymbol{i}}\\
		0 & \text{otherwise}
	\end{matrix}\right.
\end{aligned}
\end{equation}
 and enforce the periodic boundary condition according to \eqref{equ:periodic}. Following \eqref{equ:coeff}, we define the DNN represented coefficients as $U_i(t) = t \mathcal{N}(t,x_{i+\frac{1}{2}}) + \sin(2\pi x_{i+\frac{1}{2}})$ in 1D, $U_{i_1,i_2}(t) = t \mathcal{N}(t,x^1_{i_1+\frac{1}{2}},x^2_{i_2+\frac{1}{2}}) + \sin(2\pi (x^1_{i_1+\frac{1}{2}}+x^2_{i_2+\frac{1}{2}} ) )$ in 2D, and  $U_{i_1,i_2,i_3}(t) = t \mathcal{N}(t,x^1_{i_1+\frac{1}{2}},,x^2_{i_2+\frac{1}{2}},x^3_{i_3+\frac{1}{2}}) + \sin(2\pi (x^1_{i_1+\frac{1}{2}}+x^2_{i_2+\frac{1}{2}} +x^3_{i_3+\frac{1}{2}} ) )$ in 3D, respectively.
 
For the upwind scheme, in 3D, the loss function for the semi-discrete scheme and the fully discrete scheme based on the forward Euler method as
\begin{equation}\label{loss-semi}
 \begin{aligned}
 	 	\mathcal{L}_{\mathrm{semi}}(\theta) &=  \Bigg(\Delta t h^3\sum_{i_i,i_2,i_3,j}\bigg(6\pi \partial_t U_{i_1,i_2,i_3}(t_j)  - \frac{U_{i_1+1,i_2,i_3}(t_j) -U_{i_1,i_2,i_3}(t_j)}{h} \\ 
		& - \frac{U_{i_1,i_2+1,i_3}(t_j) -U_{i_1,i_2,i_3}(t_j)}{h} - \frac{U_{i_1,i_2,i_3+1}(t_j) - U_{i_1,i_2,i_3}(t_j)}{h}  \bigg)^2\Bigg)^{1/2},
 \end{aligned}
 \end{equation}
and
\begin{equation}\label{loss-fe}
\begin{aligned}
\mathcal{L}_{\mathrm{FE}}(\theta) &= \Bigg(\Delta t h^3 \sum_{i_i,i_2,i_3,j}\bigg(6\pi \frac{ U_{i_1,i_2,i_3}(t_{j+1}) - U_{i_1,i_2,i_3}(t_j)}{\Delta t} - \frac{U_{i_1+1,i_2,i_3}(t_j) -U_{i_1,i_2,i_3}(t_j)}{h} \\  
& - \frac{U_{i_1,i_2+1,i_3}(t_j)  -U_{i_1,i_2,i_3}(t_j)}{h}  - \frac{U_{i_1,i_2,i_3+1}(t_j)-U_{i_1,i_2,i_3}(t_j)}{h}  \bigg)^2\Bigg)^{1/2},
\end{aligned}
\end{equation}
respectively.

Numerical results of both loss functions are recorded in Table \ref{tbl:linear cv law}. The fully discrete method based on the forward Euler scheme \eqref{loss-fe} shows a better approximation accuracy than the semi-discrete method using AutoGrad \eqref{loss-semi}. This implies that use of discrete derivative may lead to better results for time-dependent problems. For moderate mesh size $h$, the first-order convergence is observed with respect to $h$. For smaller $h$, the first-order convergence is lost but is recovered when a wider network with the width $200$ is employed; see Table \ref{tbl:linear cv law wide} for details.
 \begin{table}[H]
 \centering
 \begin{tabular}{|c|c|c|c|c|c|}
 \hline
  \multirow{2}{*}{d}	& \multirow{2}{*}{$h = \Delta t $}&  \multicolumn{2}{c|}{Fully discrete} & \multicolumn{2}{c|}{Semi-discrete} \\
 \cline{3-6}
  & & error & order & error & order\\
 \hline
 \multirow{6}{*}{1} & 1/10 & 2.86 e-01 &       & 3.04 e-01 & \\
   & 1/20 & 1.50 e-01 & 0.93 & 1.58 e-01 & 0.93\\
   & 1/40 & 7.73 e-02 & 0.94 & 8.05 e-02 & 0.98\\
   & 1/80 & 3.95 e-02 & 0.96 & 8.39 e-02 & -0.05\\
   & 1/160& 2.10 e-02 & 0.91 & 7.01 e-02 & 0.25\\
   & 1/320& 1.72 e-02 & 0.28 & 1.44 e-01 & -1.04\\
 \hline
 \multirow{6}{*}{2} & 1/10 & 3.32 e-01 &       & 3.43 e-01 & \\
   & 1/20 & 1.72 e-01 & 0.90  & 1.81 e-01 & 0.91\\
   & 1/40 & 8.90 e-02 & 0.95  & 8.89 e-02 & 1.03\\
   & 1/80 & 4.68 e-02 & 0.92  & 6.00 e-02 & 0.56\\
   & 1/160& 2.57 e-02 & 0.86  & 5.40 e-02 & 0.15\\
   & 1/320& 1.87 e-02 & 0.45  & 5.64 e-02 & -0.06\\
   \hline
 \multirow{6}{*}{3} & 1/10 & 3.59 e-01 &       & 3.75 e-01 & \\
   & 1/20 & 1.92 e-01 & 0.90  & 2.02 e-01 & 0.89\\
   & 1/40 & 9.95 e-02 & 0.95  & 1.03 e-01 & 0.96\\
   & 1/80 & 5.20 e-02 & 0.93  & 6.94 e-02 & 0.57\\
   & 1/160& 3.14 e-02 & 0.72  & 9.45 e-02 & -0.44\\
   & 1/320& 2.09 e-02 & 0.58  & 1.16 e-01 & -0.30\\
 	\hline
 \end{tabular}
 \caption{The averaged $L^2$ relative error in the last 1000 steps and the convergence rate for the linear conservation law \eqref{eqn:linear} with two loss functions \eqref{loss-semi} and \eqref{loss-fe}. A neural network with $4$ hidden layers and two shortcut connections is used and the batchsize is chosen as $10000$. In 1D, the network width is set to be $20$ and the total number of parameters is $1341$. In 2D, the network width is set to $40$ and the total number of parameters is $5121$. In 3D, the network width is set to be $60$ and the total number of parameters is $11341$.}
 \label{tbl:linear cv law}
 \end{table} 
\begin{table}[H]
 	\centering 
 	\begin{tabular}{|c|c|c|}
 	\hline
 	$h = \Delta t $ & error & order \\
 	\hline
 	1/10 & 3.64 e-01 & \\
	1/20 & 1.92 e-01 & 0.92\\
	1/40 & 9.92 e-02 & 0.95\\
	1/80 & 5.04 e-02 & 0.97\\
	1/160 & 2.54 e-02 & 0.98\\
	1/320 & 1.29 e-02 & 0.97\\
	1/640 & 6.78 e-03 & 0.93\\
	1/1280 &4.24 e-03 & 0.67\\
	\hline
 	\end{tabular}
 	\caption{The averaged $L^2$ relative error in the last $1000$ steps and the convergence rate for the linear conservation law \eqref{eqn:linear} using loss function \eqref{loss-fe} and a wider neural network with width $200$ in 3D.}\label{tbl:linear cv law wide}
 \end{table}
 
 For the second-order method, the approximate solution in 1D is constructed as
 \begin{equation}
 \begin{aligned}
	&u_{h,\theta}(t,x) = \sum_{i} [U_i^0\varphi^0_{i}(x) + U_i^1\varphi^1_{i}(x) ],
\end{aligned}
\end{equation}
where
 \begin{equation}
 \begin{aligned}
 	&
	\varphi^0_{i}(x) = \left\{\begin{matrix}
		1 & x \in I_{i}\\
		0 & \text{otherwise}
	\end{matrix}\right. , \\
	&\varphi^1_{i}(x) = \left\{\begin{matrix}
		(x-x_{i+\frac{1}{2}}) & x \in I_{i}\\
		0 & \text{otherwise}
	\end{matrix}\right. ,
	\\
 	&U^0_i(t) = t\mathcal{N}^0_{\theta_0}(t,x_{i+\frac{1}{2}}) + \sin(2\pi x_{i+\frac{1}{2}}),\\
 	&U^1_i(t) = t\mathcal{N}^1_{\theta_1}(t,x_{i+\frac{1}{2}}) + 2\pi\cos(2\pi x_{i+\frac{1}{2}}).
 	\end{aligned}
 \end{equation}

Based on \eqref{equ:linear loss V2}, the corresponding loss function consists of two contributions
 \begin{equation}
 \begin{aligned}
	&\mathcal{L}_0(\theta_0) = \Bigg(\Delta t h\sum_{i,j}\left(2\pi\frac{U_i^0(t_{j+1} ) - U_i^0(t_j)}{\Delta t}  h + \hat{f}_{i+\frac{3}{2}} - \hat{f}_{i+\frac{1}{2}} \right)^2\Bigg)^{1/2},\\
	&\mathcal{L}_1(\theta_1) = \Bigg(\Delta t h\sum_{i,j}\left(2\pi\frac{U_i^1(t+\Delta t ) - U_i^1(t)}{\Delta t}  \frac{h^3}{12} + hu_0  + \frac{h}{2}\hat{f}_{i+\frac{3}{2}} +\frac{h}{2} \hat{f}_{i+\frac{1}{2}} \right)^2\Bigg)^{1/2}.
 \end{aligned}
 \end{equation}
Often $\mathcal{L}_0(\theta_0)$ and $\mathcal{L}_1(\theta_1)$ are not of the same order of magnitude, which adds additional difficulties to minimize both terms simultaneously
\begin{equation}
	\arg\min_{\{\theta_0,\theta_1\}} \mathcal{L}_0(\theta_0) + \mathcal{L}_1(\theta_1),
\end{equation}
where $\theta_0,\theta_1$ are the parameters of neural networks to approximate $U^0$ and $U^1$, respectively. We use ADMM~\cite{boyd2011distributed} to optimize $U^0$ and $U^1$. Numerical results are shown in Table \ref{tbl:linear cv law high}. Compared with Table \ref{tbl:linear cv law}, we can find the second-order scheme has a better accuracy when two identical networks are applied. If a wider and deeper network is employed, then the second-order scheme is obtained with high accuracy; see Table \ref{tbl:linear cv law high_deep}.
\begin{table}[H]
 	\centering 
 	\begin{tabular}{|c|c|c|}
 	\hline
 	$h = \sqrt{\Delta t}$ & error & order \\
 	\hline
 	1/10 & 1.01 e-01 & \\
	1/20 & 3.69 e-02 & 1.40\\
	1/40 & 2.24 e-02 & 0.79\\
	1/80 & 1.15 e-02 & 0.95\\
	1/160 & 1.06 e-02 & 0.12\\
	1/320 & 7.81 e-03 & 0.44\\
	\hline
 	\end{tabular}
 	\caption{ The averaged $L^2$ relative error in the last 1000 steps and the convergence rate for the 1D linear conservation law \eqref{eqn:linear} solved by the second-order scheme. A neural network with $4$ hidden layers and two shortcut connections is used and the batchsize is chosen as $10000$. The network width is set to be $20$ and the total number of parameters is $1341$.}
  \label{tbl:linear cv law high}
 \end{table}

 \begin{table}[H]
 	\centering 
 	\begin{tabular}{|c|c|c|}
 	\hline
 	$h = \sqrt{\Delta t}$ & error & order \\
 	\hline
 	1/10 & 7.36 e-02 & \\
	1/20 & 1.45 e-02 & 2.34\\
	1/40 & 2.33 e-03 & 2.63\\
	1/80 & 6.31 e-04 & 1.88\\
	1/160& 2.11 e-04 & 1.57\\
	\hline
 	\end{tabular}
 	\caption{ The averaged $L^2$ relative error in the last 1000 steps and the convergence rate for the 1D linear conservation law \eqref{eqn:linear} solved by the second-order scheme. A neural network with $6$ hidden layers and two shortcut connections is used and the batchsize is chosen as $10000$. The network width is set to be $60$ and the total number of parameters is $12951$.}
  \label{tbl:linear cv law high_deep}
 \end{table}
Results of the first-order and second-order schemes are summarized in Figure \ref{fig:1D_conservation_law}. The second-order scheme always has a better accuracy than the first-order scheme. A DNN with more parameters reduces the DNN approximation error and thus the convergence rate can be obtained over a larger range of grid size.
\begin{figure}[ht]
\centering
	\includegraphics[width=0.55\linewidth]{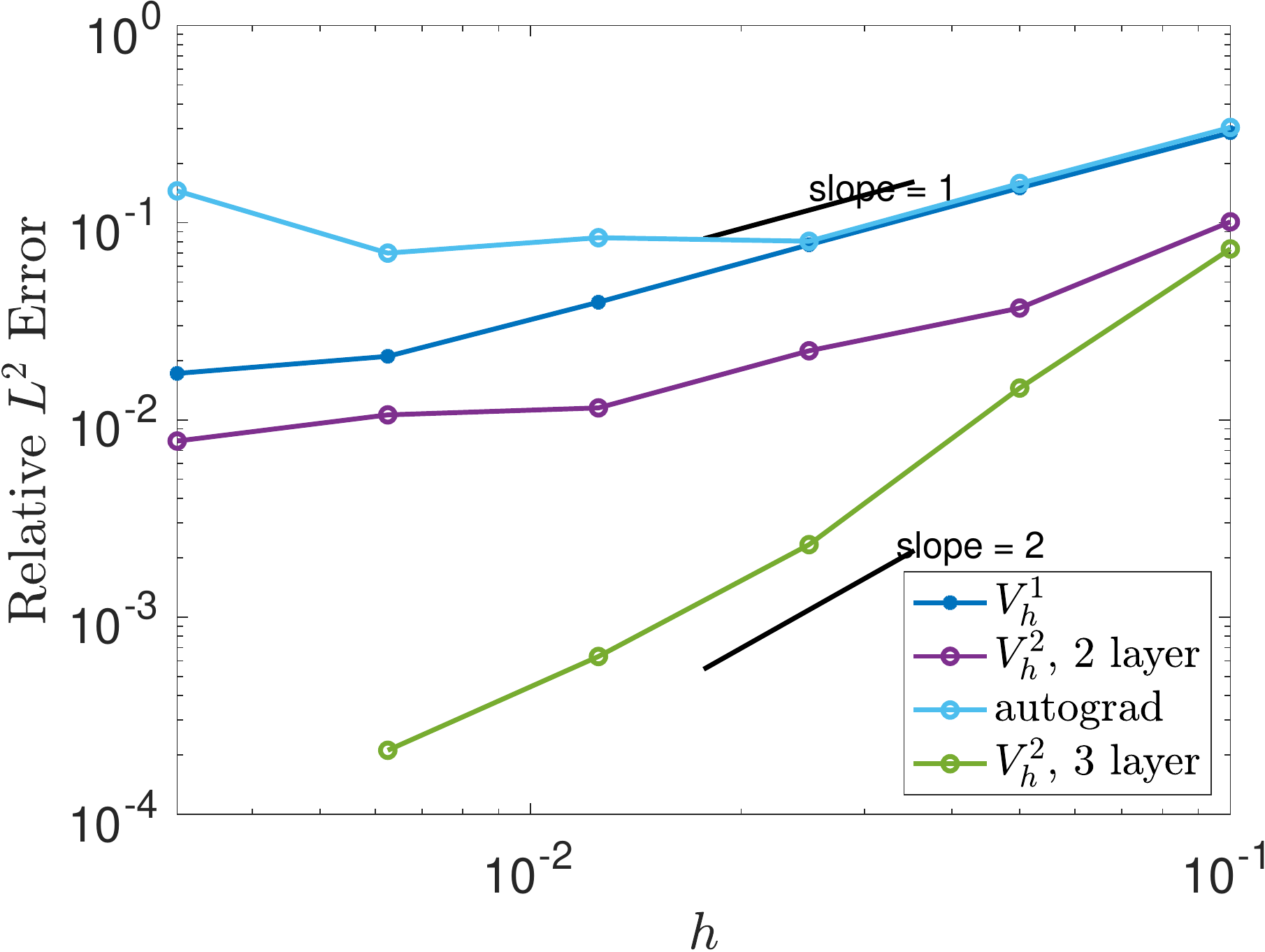}
	\caption{The averaged error of two schemes for the 1D conservation law with respect to the mesh size $h$. $V^1_h$ represents the first-order scheme with the forward Euler method in time and $V^2_h$ represents the second-order scheme with two DNNs and the forward Euler method in time. Autograd represents the first-order scheme with AutoGrad in time.}
	\label{fig:1D_conservation_law}
 \end{figure}

 \subsection{Burgers' equation}
Consider the Burgers' equation
 \begin{equation}\label{eqn:bur}
 	u_t + (\frac{u^2}{2})_x = 0
 \end{equation} 
 with initial condition 
 \begin{equation}\label{eqn:bur ic}
 	u(0,x) = \left\{
 	\begin{matrix}
 		1 & x<0\\
 		 0 & x>0
 	\end{matrix}
 	\right.
 \end{equation}
and reflecting boundary condition. The exact solution is discontinuous.
The numerical solution is constructed as 
 \begin{equation}\label{eqn:dnn-bur}
 \begin{aligned}
 	u_{\theta}(t,x) = \left\{\begin{matrix}
 		&\mathcal{N}_\theta(t,x_{i+\frac{1}{2}}) \varphi_i(x) &  t > 0 \quad x\in [x_i,x_{i+1})\\
 		&u(0,x_{i+\frac{1}{2}}) &  t = 0 \\
 	\end{matrix}\right.,
\end{aligned}
 \end{equation}
where $\varphi_i(x)$ is defined in \eqref{dnn-0}. This means that the numerical solution is approximated by a DNN at any point when $t>0$ and uses the exact solution when $t=0$. We divide the time interval $(0,T)$ into grids and assume that the temporal step size equals the spatial mesh size for simplicity. 
 
The loss function for the semi-discrete scheme is 
\begin{multline}\label{equ:loss bursemi}
 	\mathcal{L}_{\mathrm{semi}}(\theta) = \Bigg( \Delta t h\sum_{i,j} \bigg( \frac{\partial u_{\theta}(t_j,x_{i+\frac{1}{2}})}{\partial t} - \hat{f}^{\mathrm{God}}(u_{\theta}(t_j,x^-_i),u_{\theta}(t_j,x^+_i)) \\
 	+\hat{f}^{\mathrm{God}}(u_{\theta}(t_j,x^-_{i+1}),u_{\theta}(t_j,x^+_{i+1}))  \bigg)^2\Bigg)^{1/2},
\end{multline}
and the loss function for the fully-discrete scheme using the forward Euler method is
\begin{multline}\label{equ:loss burFE}
\mathcal{L}_{\mathrm{FE}}(\theta) = \Bigg(\sum_{i,j} \bigg( \frac{u_{\theta}(t_{j+1},x_{i+\frac{1}{2}})-u_{\theta}(t_j,x_{i+\frac{1}{2}})}{\Delta t} - \hat{f}^{\mathrm{God}}(u_{\theta}(t_j,x^-_i),u_{\theta}(t_j,x^+_i)) \\
+\hat{f}^{\mathrm{God}}(u_{\theta}(t_j,x^-_{i+1}),u_{\theta}(t_j,x^+_{i+1}))  \bigg)^2\Bigg)^{1/2},
\end{multline}
respectively. The error of these two loss functions is shown in Table \ref{tbl:burs error}. It is observed that the forward Euler method produces much better results than the autograd method for the Burgers' equation \eqref{eqn:bur} with a non-smooth solution \eqref{eqn:bur ic}. The detailed solution profiles are visualized in Figure \ref{fig:landscape of Bgs' eq}.
 \begin{table}[H]
 \centering
 \begin{tabular}{|c|c|c|c|c|}
 \hline
     $h=\Delta t$ & Fully discrete     & Semi-discrete \\
 	\hline
	1/10 & 9.87 e-02  &  3.82 e-01\\
    1/20 & 4.88 e-02  &  3.37 e-01\\
    1/40 & 3.48 e-02  &  3.03 e-01\\
	1/80 & 2.58 e-02  &  3.12 e-01\\
	1/160& 1.84 e-02  &  1.91 e-01\\
	1/320& 1.73 e-02  &  3.86 e-01\\
\hline
 \end{tabular}
 \caption{The averaged $L^2$ relative error in the last 1000 steps and the convergence rate for the Burgers' equation \eqref{eqn:bur}-\eqref{eqn:bur ic} with two loss functions \eqref{equ:loss burFE} and \eqref{equ:loss bursemi}. A neural network with $4$ hidden layers and two shortcut connections is used and the batchsize is chosen as $10000$. In 1D, the network width is set to be $20$ and the total number of parameters is $1341$.}
 \label{tbl:burs error}	
 \end{table}
 
 \begin{figure}[H]
	\subfigure[$t = 0.25$]{
		\includegraphics[width=0.45\linewidth]{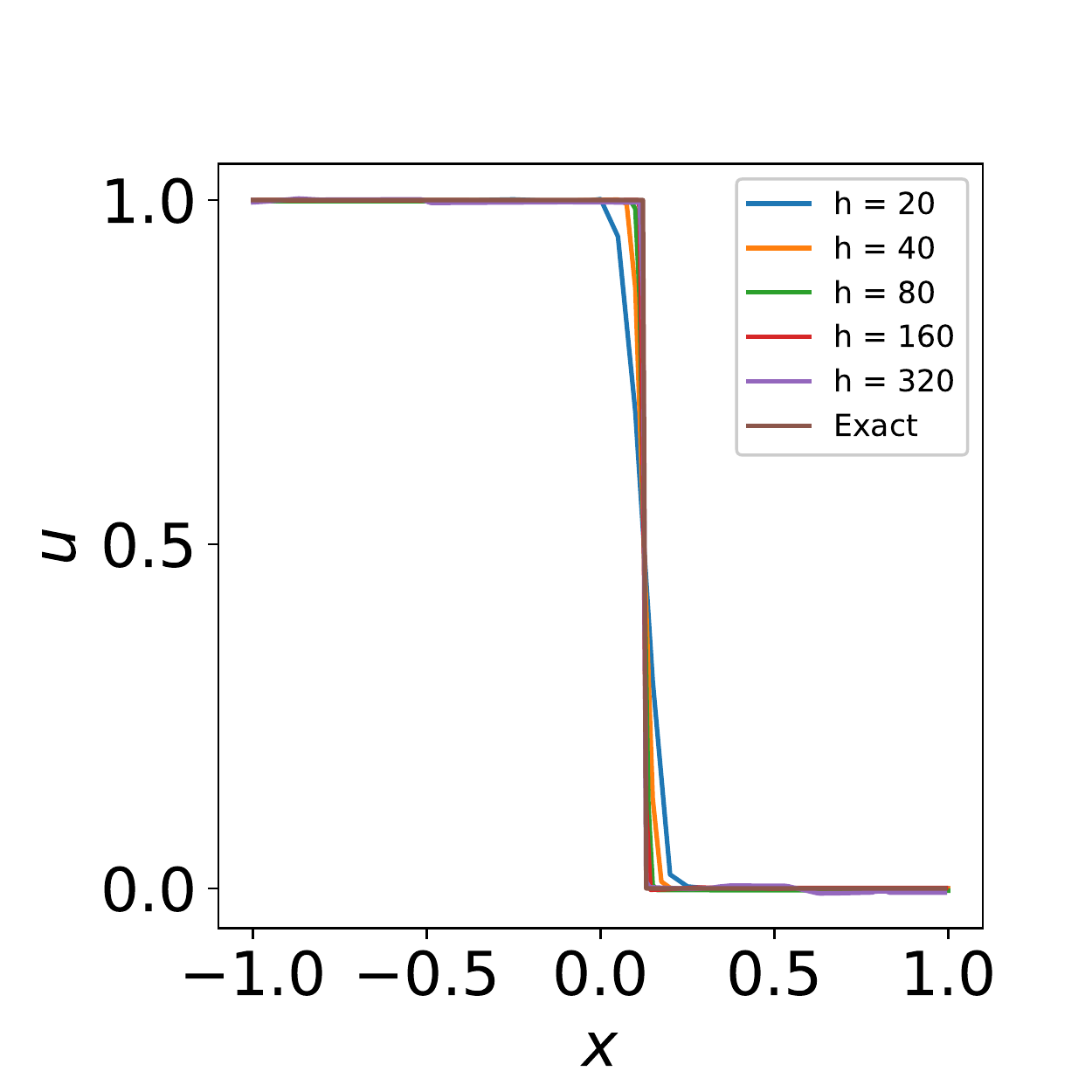}
	}
	\subfigure[$t = 0.5$]{
		\includegraphics[width=0.45\linewidth]{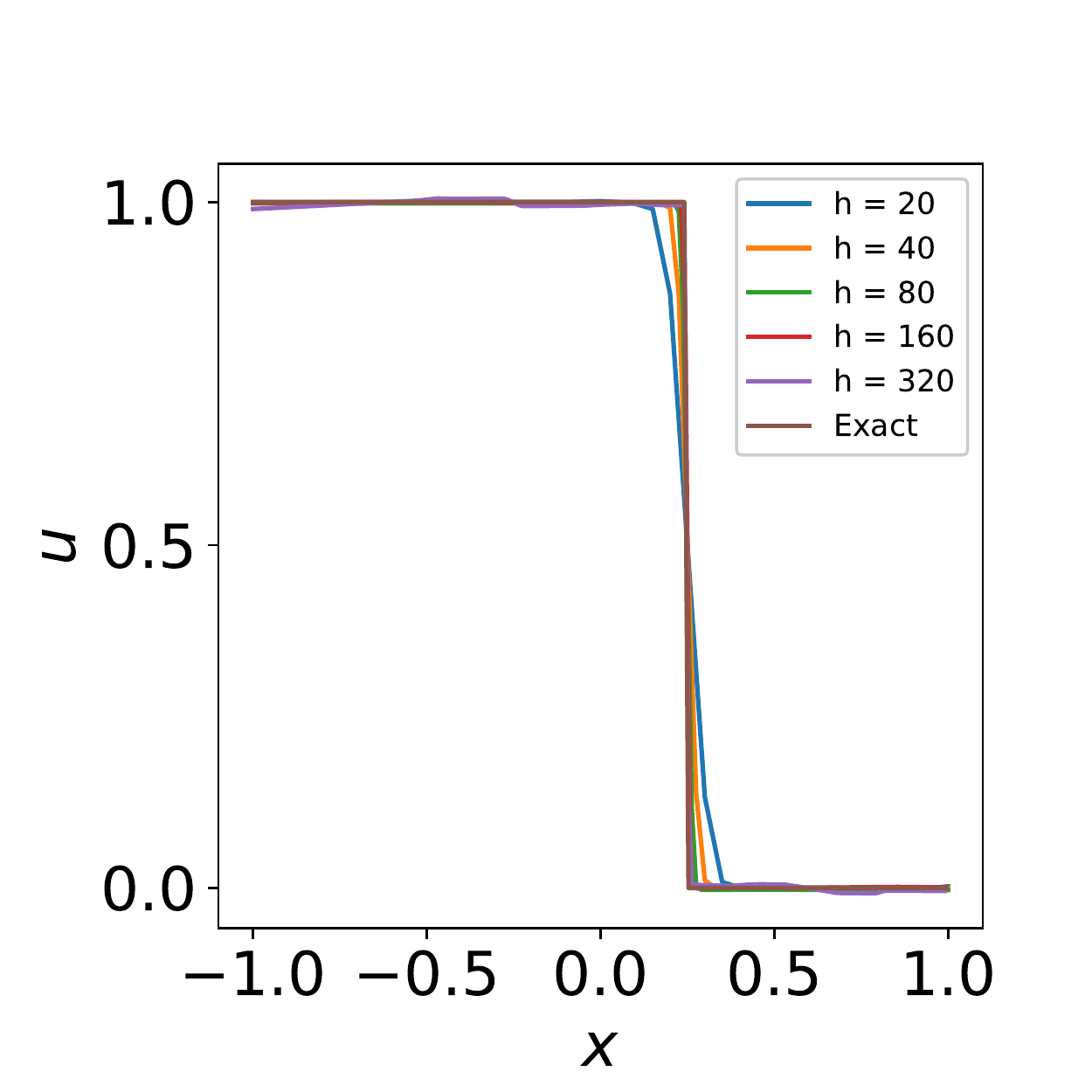}
	}
	
	\subfigure[$t = 0.75$]{
		\includegraphics[width=0.45\linewidth]{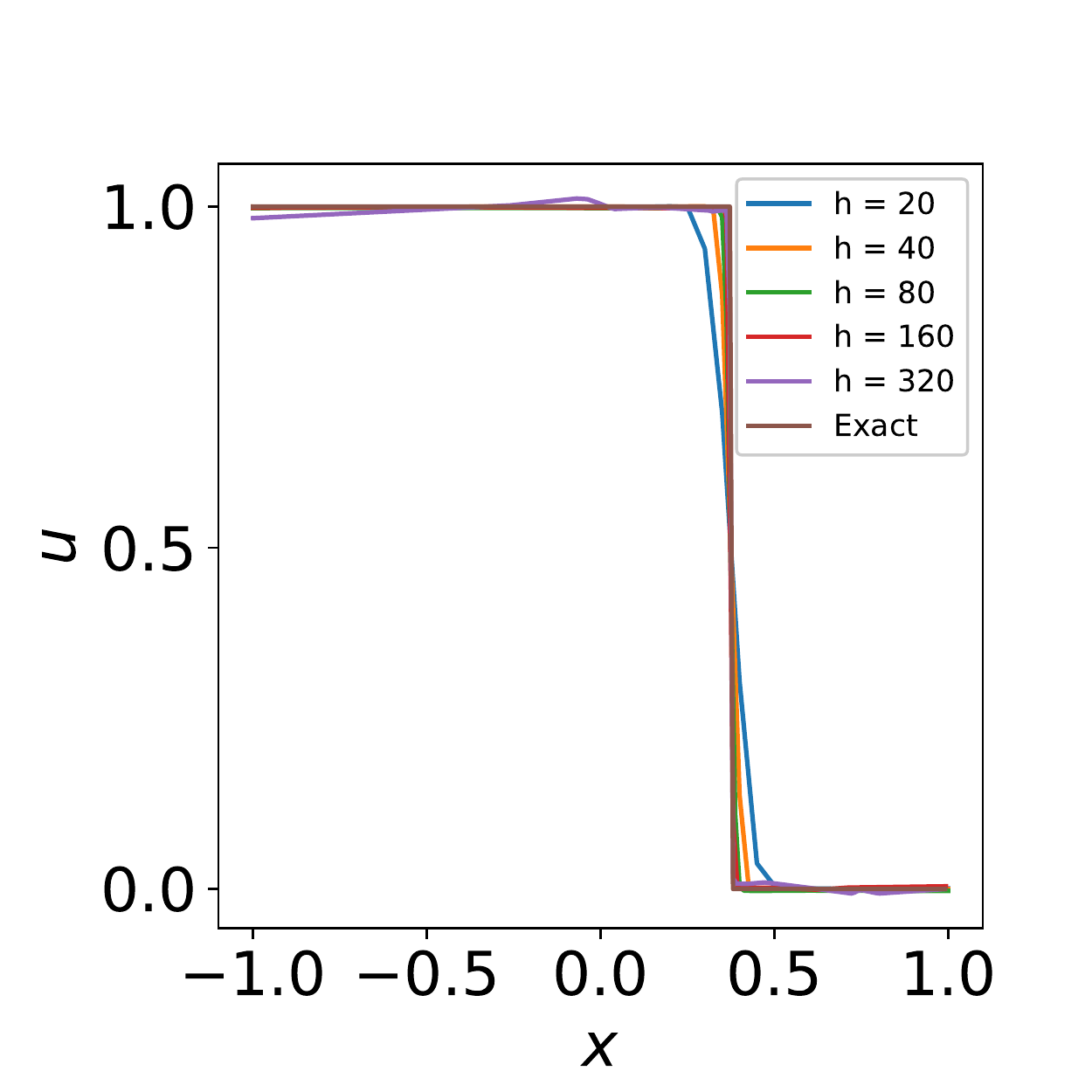}
	}
	\subfigure[$t = 1$]{
		\includegraphics[width=0.45\linewidth]{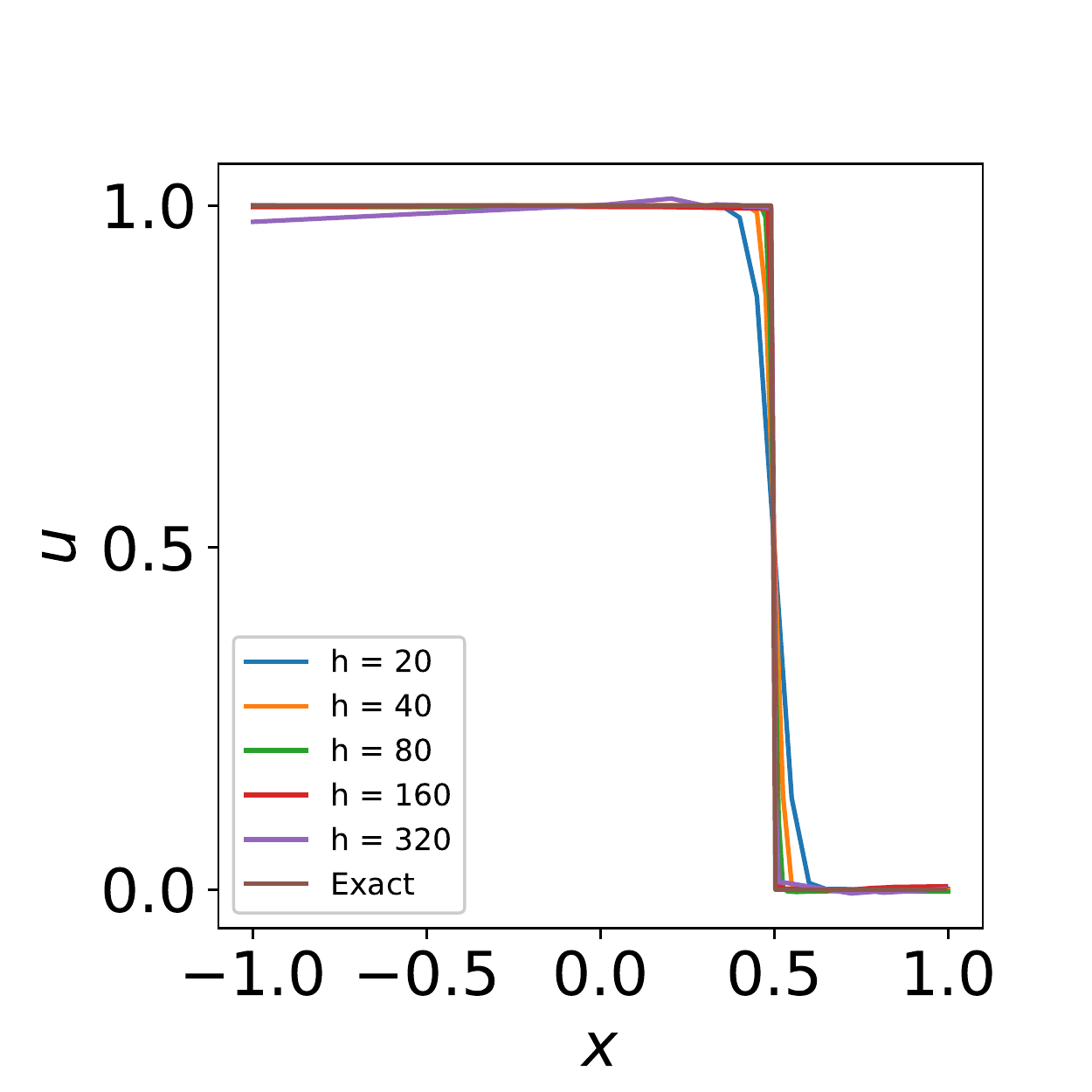}
	}
	\caption{1D solution profiles of the Burgers' equation \eqref{eqn:bur} approximated by the neural network solution \eqref{eqn:dnn-bur}.}
\label{fig:landscape of Bgs' eq}
\end{figure}

\subsection{Stochastic linear conservation law}
Consider the stochastic linear conservation law
\begin{equation}\label{eqn:linear3Dstochastic}
	\begin{aligned}
		2 d  \pi u_t  - (1+\exp(-\sum_{j=1}^s \omega_j)^2) \sum_{i=1}^d u_{x_i}  = 0 
	\end{aligned}
\end{equation}
with periodic boundary condition and initial condition $u(0,\boldsymbol{x},\boldsymbol{\omega}) = \sin\left(2\pi \sum_{i=1}^d x_i \right) $. The exact solution of the problem is $u(t,\boldsymbol{x},\boldsymbol{\omega}) = \sin\left((1+\exp(-\sum_{j=1}^s \omega_j)^2) t + 2\pi \sum_{i=1}^d x_i\right)$. 
The DNN solution is constructed as
\begin{equation}
\label{equ:construction}
\begin{aligned}
	&u(t,\boldsymbol{x},\boldsymbol{\omega}) = \sum_{\boldsymbol{i}} [t\mathcal{N}_\theta(t,\boldsymbol{x}_{\boldsymbol{i}+\frac{1}{2}},\boldsymbol{\omega}) + g(\boldsymbol{x}_{\boldsymbol{i}+\frac{1}{2}} )]\varphi_{\boldsymbol{i}}(\boldsymbol{x}),
\end{aligned}
\end{equation}
where $\varphi_{\boldsymbol{i}}(\boldsymbol{x})$ is defined in \eqref{equ:coeff}. Since the fully discrete scheme works better than the semi-discrete scheme, we only use the fully discrete scheme with the forward Euler method in time. The corresponding loss function reads as
\begin{multline}
  	 	\mathcal{L}_{\mathrm{FE}}(\theta) = \Bigg(\Delta t h^d\sum_{i,j}\bigg(2\pi \frac{ u_{\theta}(t_{j+1},\boldsymbol{x}_{\boldsymbol{i}+\frac12},\boldsymbol{\omega}) - u_{\theta}(t_j,\boldsymbol{x}_{\boldsymbol{i}+\frac12},\boldsymbol{\omega})}{\Delta t} \\
		- (1+\exp(-\sum_{j=1}^s \omega_j)^2) \frac{u_{\theta}(t_j,\boldsymbol{x}_{\boldsymbol{i}+1},\bom) -u_{\theta}(t_j,\boldsymbol{x}_{\boldsymbol{i}},\boldsymbol{\omega})}{h} \bigg)^2\Bigg)^{1/2}.
 \end{multline}
Figure \ref{fig:landscape of CV eq} plots the expectation and the variance of the solution along the line $x_1=x_2=x_3$ when $d=3$, $s=2$, and $s=5$. Table \ref{tbl:cv error stochastic} records the relative $L^2$ errors of the expectation and the variance for $s = 50$ and $100$ respectively. The first-order accuracy is observed for the stochastic linear conservation law in both expectation and variance.
\begin{figure}[htbp]
\centering
	\subfigure[$s=2$]{
		\includegraphics[width=0.45\linewidth]{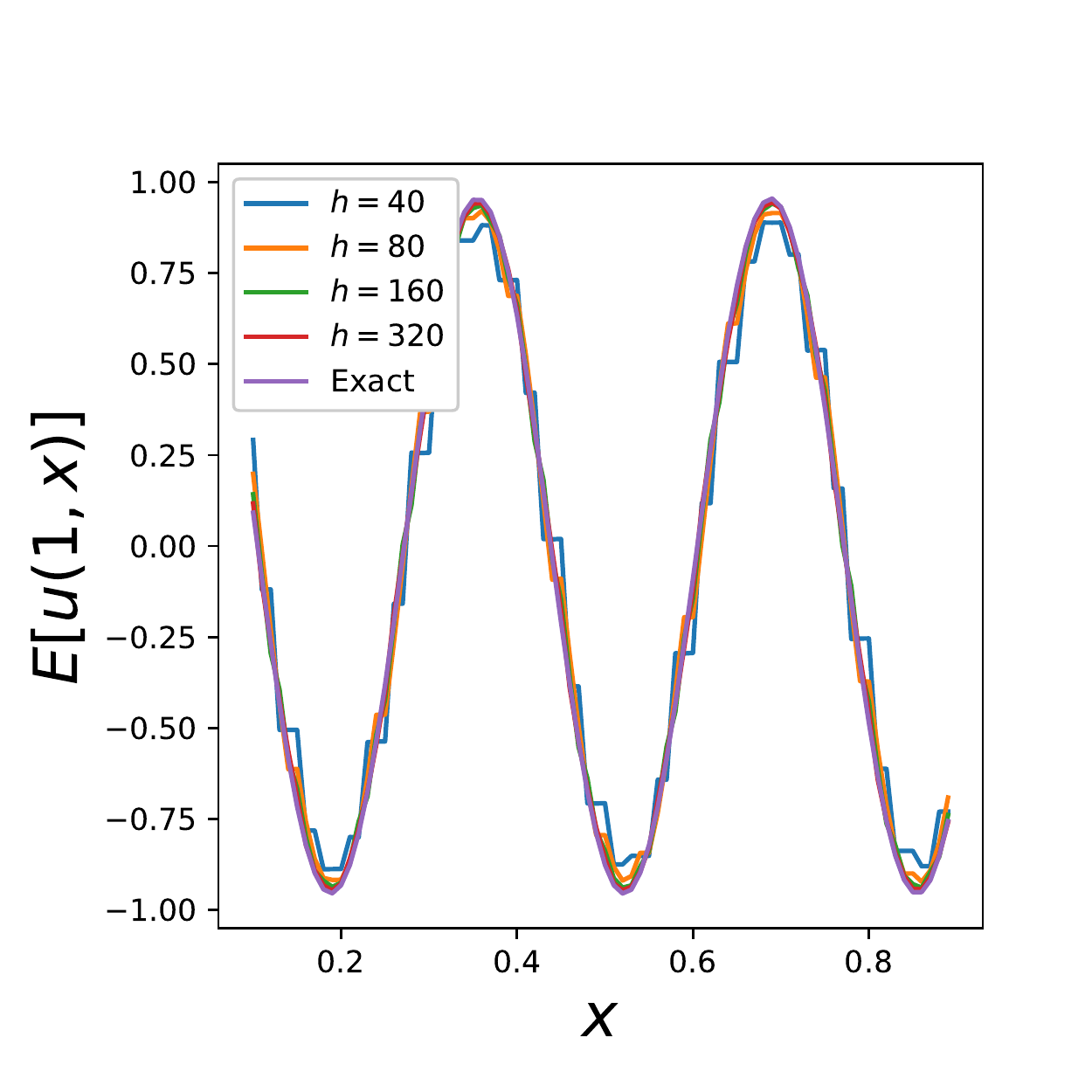}
	}
	\subfigure[$s=2$]{
		\includegraphics[width=0.45\linewidth]{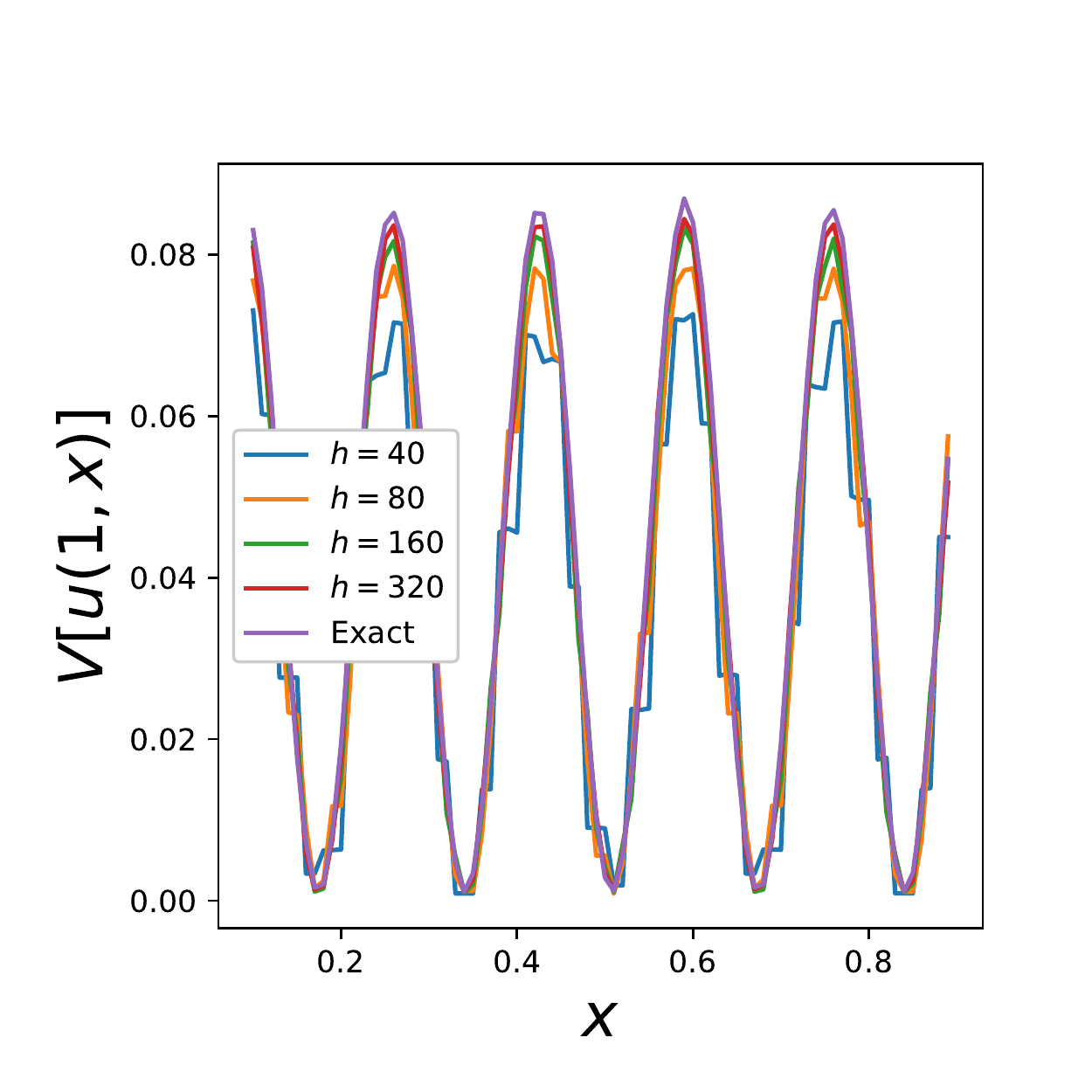}
	}
	
	\subfigure[$s=5$]{
		\includegraphics[width=0.45\linewidth]{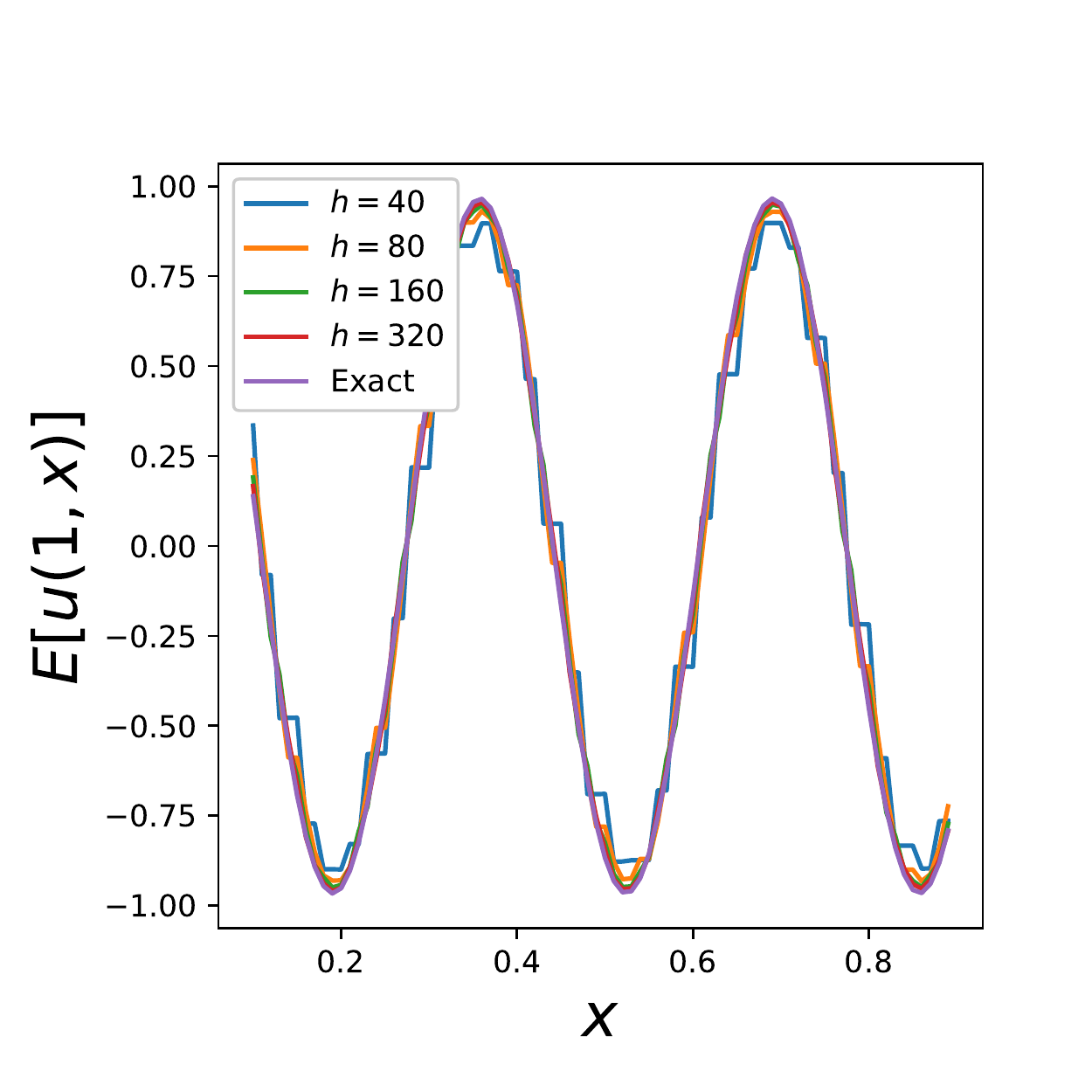}
	}
	\subfigure[$s=5$]{
		\includegraphics[width=0.45\linewidth]{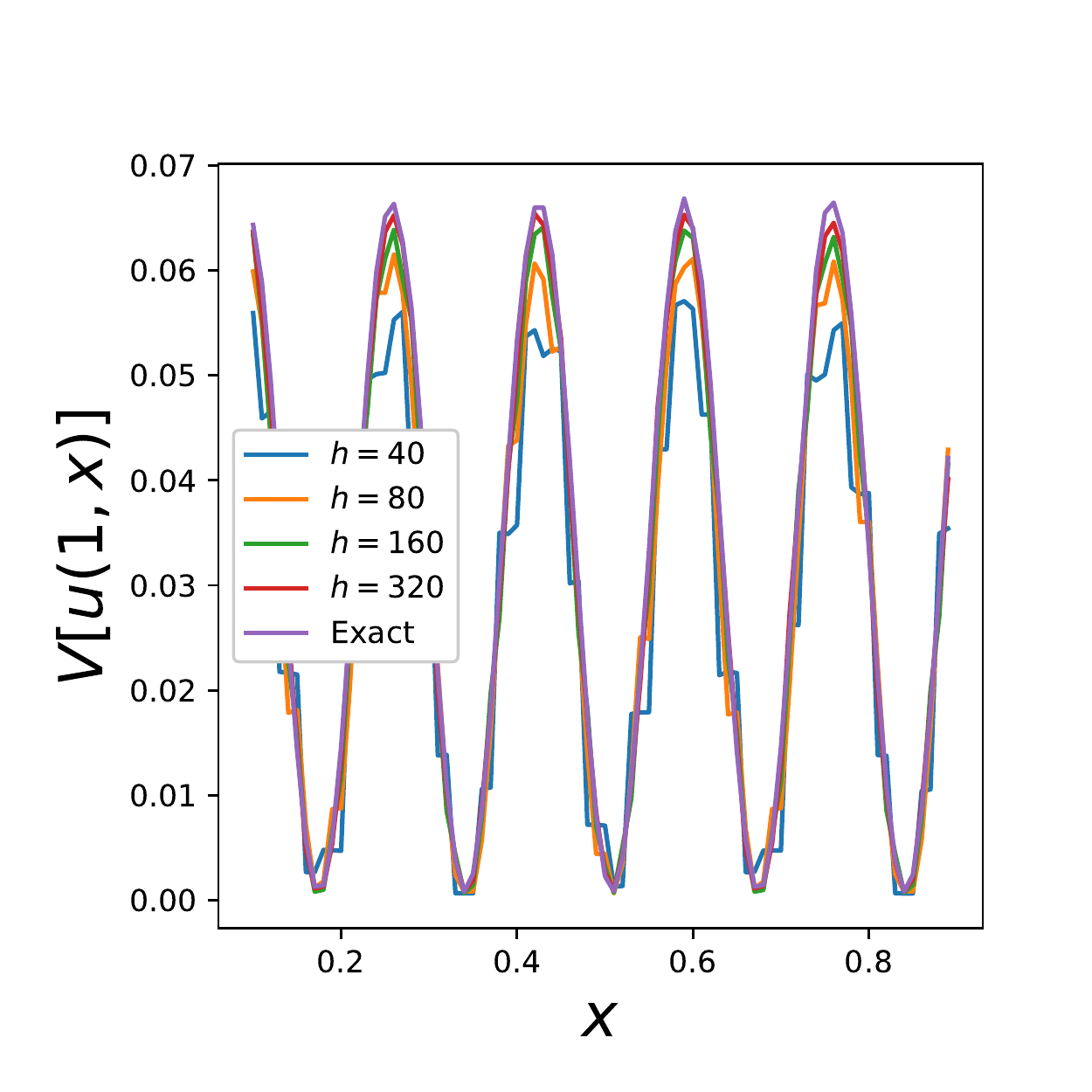}
	}
	\caption{Numerical and exact expectations and variances of the solution to the stochastic linear conservation law \eqref{eqn:linear3Dstochastic} along the line where $x_1=x_2=x_3$ when $d=3$, $s=2$ and $s=5$. When $s=2$, the network width is $40$ and the total number of parameters is $8441$. When $s=5$, the network width is $50$ and the total number of parameters is $13211$. The batchsize is 200000.}
\label{fig:landscape of CV eq}
\end{figure}
\begin{table}[ht]
 \centering
 \begin{tabular}{|c|c|c|c|c|c|c|}
 \hline
     $s$ & $h = \Delta t$ &  Expectation & Order & Variance  & Order\\
 	\hline
     50 & 1/40 & 1.54 e-01 & & 2.13 e-01 &  \\
     50 & 1/80 & 7.85 e-02 & 0.97 & 1.14 e-01 & 0.93 \\
     50 & 1/160 & 3.88 e-02 & 1.01 & 5.61 e-02 & 0.96\\
     50 & 1/320 & 1.96 e-02 & 0.98 & 3.22 e-02 & 0.79\\
    \hline
	 100 & 1/40 & 1.53 e-01 &  & 2.07 e-01 & \\
	 100 & 1/80 & 7.83 e-02 & 0.97 &  1.12 e-01 & 0.88\\
	 100 & 1/160 & 3.93 e-02 & 0.99 & 5.82 e-02 & 0.95\\
	 100 & 1/320 & 2.01 e-02 & 0.96 &  2.93 e-02 & 0.98\\
\hline
 \end{tabular}
\caption{The averaged $L^2$ relative error in the last 1000 steps and the convergence rate for the stochastic conservation law \eqref{eqn:linear3Dstochastic}. The neural network used here has 6 hidden layers and 3 shortcut connections. When $s=50$, the network width is $100$ and the total number of parameters is $56101$. When $s=100$, the network width is $200$ and the total number of parameters is $222201$. The batchsize is 200000.}
\label{tbl:cv error stochastic}	
\end{table}

\subsection{Stochastic Burgers' equation}
Consider the stochastic Burgers' equation defined as
 \begin{equation}\label{eqn:bur stochastic}
 	u_t + (\frac{u^2}{2})_x = 0
 \end{equation} 
 with initial condition 
 \begin{equation}\label{eqn:bur stochastic ic}
 	u(0,x,\boldsymbol{\omega}) = \left\{
 	\begin{matrix}
 		1+\epsilon \sum_{i=1}^s \omega_i & x<0 \\
 		 0 & x>0 
 	\end{matrix}
 	\right..
 \end{equation}
The exact solution is
\begin{equation}
	 	u(t,x,\boldsymbol{\omega}) = \left\{
 	\begin{matrix}
 		z  & x< \frac{z}{2}\\
 		 0 & x>\frac{z}{2}
 	\end{matrix}
 	\right.,
\end{equation}
where $z = 1+\eps \sum_{i=1}^s \om_i$. 
The expectation of the solution is
\begin{equation}
		 	\mathbb{E}_{\boldsymbol{\omega}}[u(t,x,\boldsymbol{\omega})] = \left\{
 	\begin{matrix}
 		1  & x< \frac{1-\eps}{2}\\
 		\frac{1-4x^2+2\eps+\eps^2}{4\eps} & \frac{1+\eps}{2}
>x> \frac{1-\eps}{2}\\
 		 0 & x>\frac{1+\eps}{2}
 	\end{matrix}\right..
\end{equation}
The reference variance of the solution is simulated  by the MC method.
The neural network setup for different $s$ is listed in Table \ref{tbl:Neural network setting for s Bur problem}.
\begin{table}
	\centering
	\begin{tabular}{|c|c|c|c|c|}
		\hline
		$s$ & number of hidden layers & network width & number of parameters\\
		\hline
		2  & 6 & 40 & 8441\\
		5  & 6 & 50 & 13211	\\
		10 & 6 & 50 & 13451\\
		50 & 6 & 100 & 55901\\
		100& 6 & 200 & 221801\\
		200& 6 & 400 & 883601\\
		\hline
	\end{tabular}
	\caption{Network setups for stochastic equations with different number of random variables.}
	\label{tbl:Neural network setting for s Bur problem}
\end{table}
The approximate solution is constructed as 
 \begin{equation}
 \begin{aligned}
 	u_{\theta}(t,x,\boldsymbol{\omega}) = \left\{\begin{matrix}
 		&\mathcal{N}_\theta(t,x_{i+\frac{1}{2}},\boldsymbol{\omega}) \varphi_i(x) &  t > 0 \;\; x\in(x_i,x_{i+1})\\
 		&u(0,x_{i+\frac{1}{2}},\boldsymbol{\omega}) &  t = 0 \\
 	\end{matrix}\right.,
\end{aligned}
 \end{equation}
and the loss function is the same as \eqref{equ:loss burFE}. Expectation and variance errors of the proposed method are recorded in Table \ref{tbl: s Bur equation 10000 eroor} and Table \ref{tbl: s Bur equation 50000 eroor} when the batch size is $10000$ and $50000$, respectively. The relative $L^2$ error in expectation and variance reduces when the batch size is increased and the relative $L^1$ error is slightly better than the $L^2$ error. Furthermore, we apply the  quasi-Monte Carlo method \cite{caflisch1998monte,chen2021quasi} to approximate the loss function; see Table \ref{tbl: s Bur equation 50000 quasi}. It is found that the error in this case is smaller than that of the MC method but cannot be further reduced with smaller mesh sizes. In addition, we apply the multilevel MC method \cite{gopalakrishnan2003multilevel} to approximate the loss function and the numerical result is recorded in Table \ref{tbl: s Bur equation multi level}. Again, slightly better results are obtained but the approximation of the variance is not good.
\begin{figure}
	\subfigure[$s=2, \epsilon = 0.25$]{\includegraphics[width=0.45\linewidth]{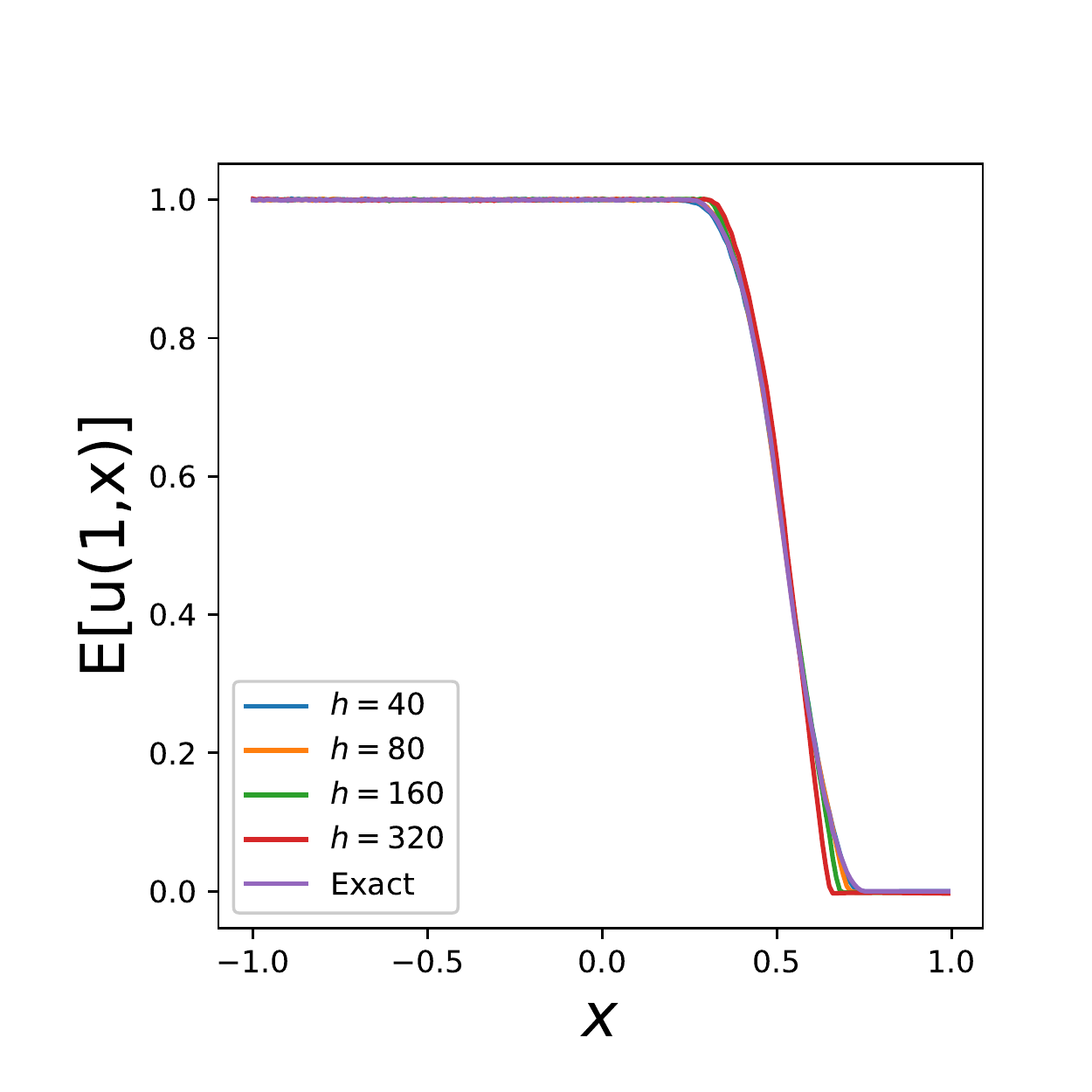}
	}
	\subfigure[$s=2, \epsilon = 0.25$]{\includegraphics[width=0.45\linewidth]{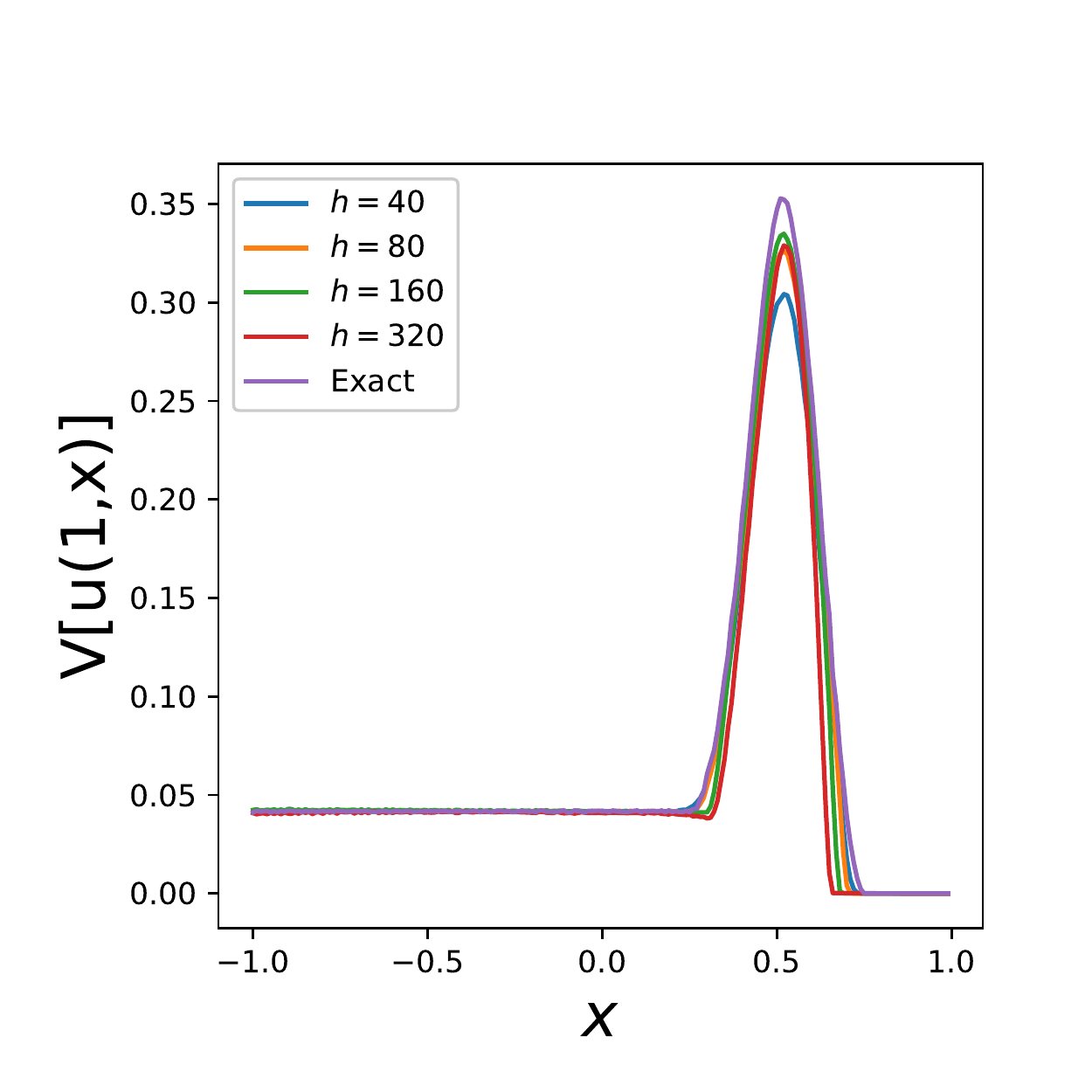}
	}
\newline
	\subfigure[$s=10, \epsilon = 0.05$]{
		\includegraphics[width=0.45\linewidth]{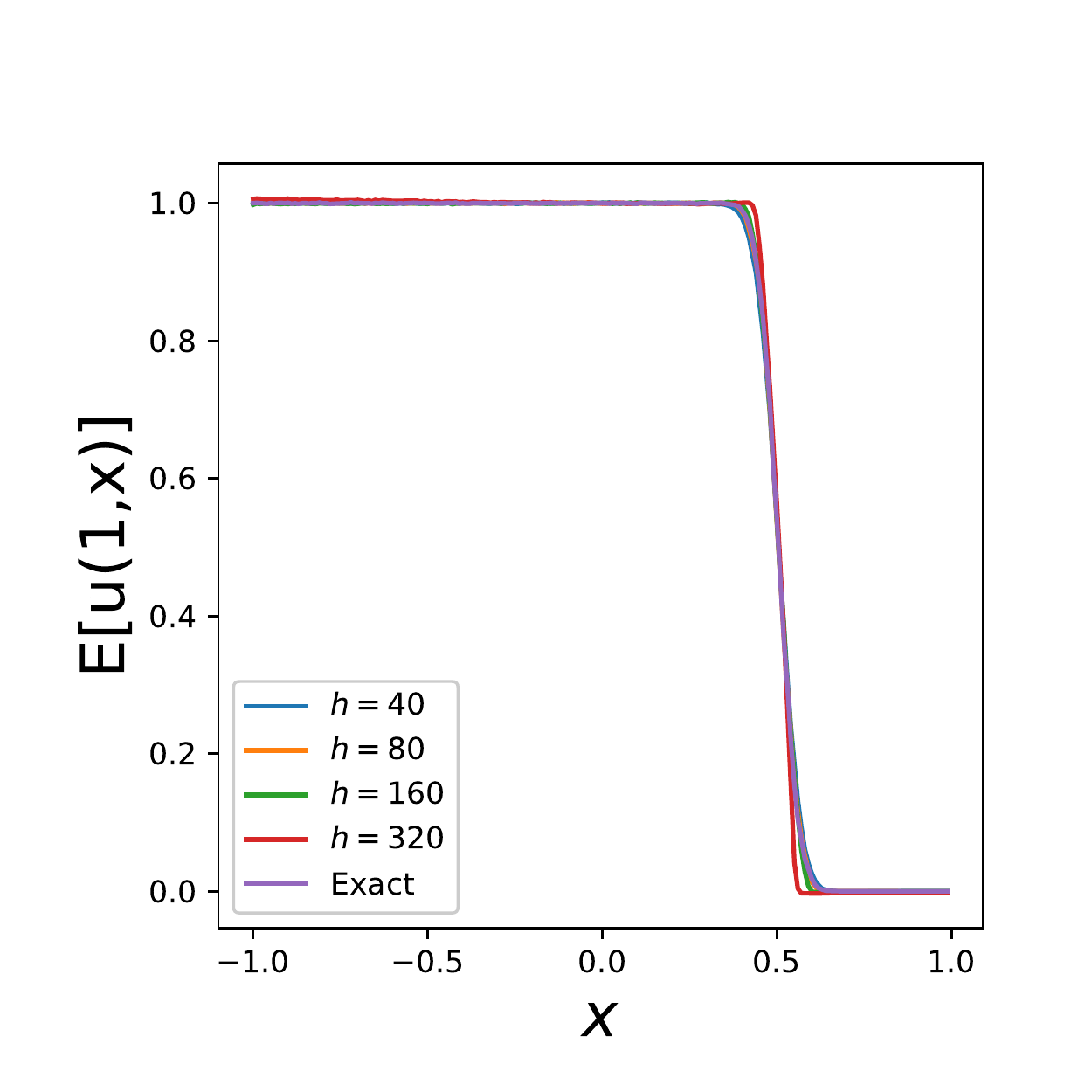}
	}
	\subfigure[$s=10, \epsilon = 0.05$]{
		\includegraphics[width=0.45\linewidth]{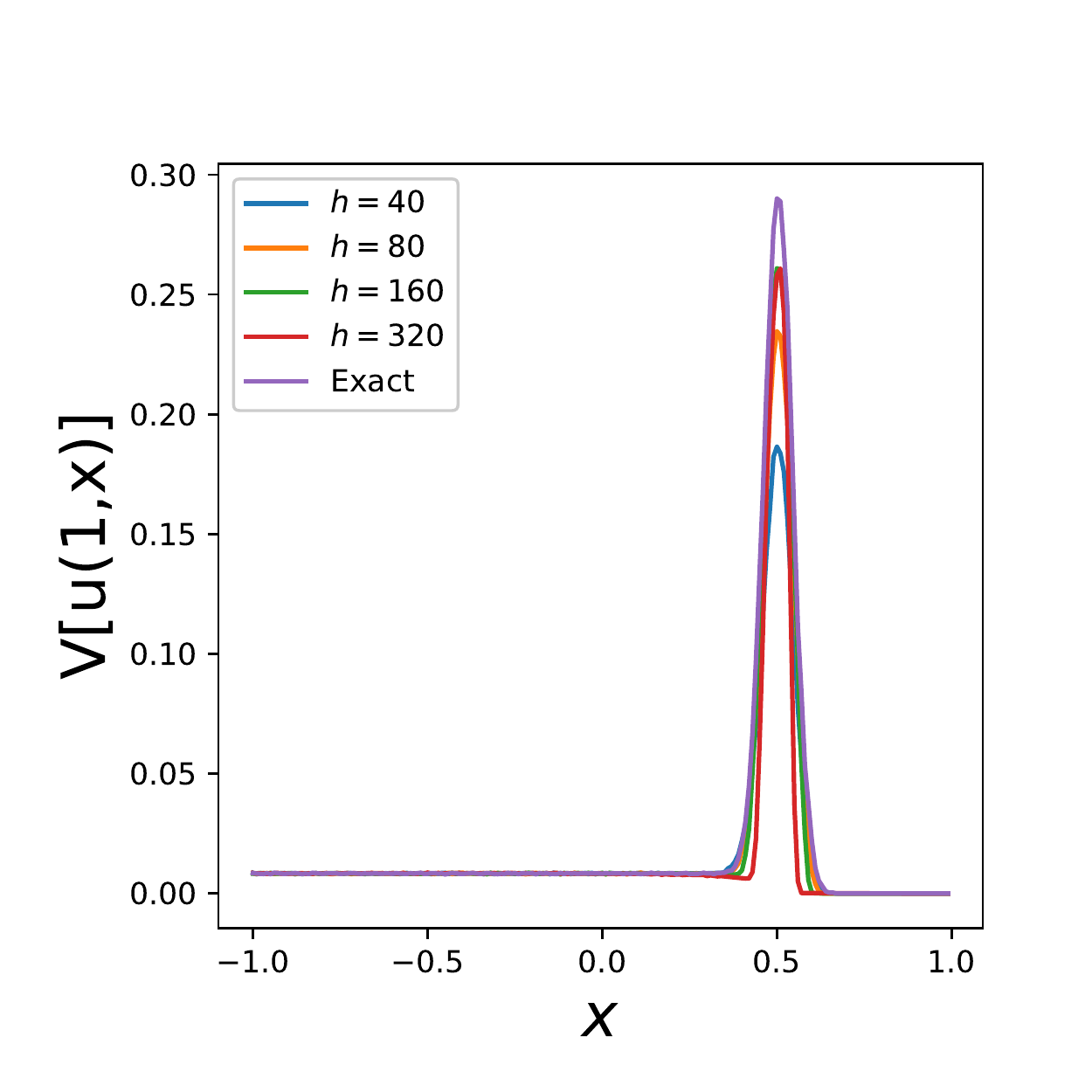}
	}
\newline
	\subfigure[$s=100, \epsilon = 0.005$]{
		\includegraphics[width=0.45\linewidth]{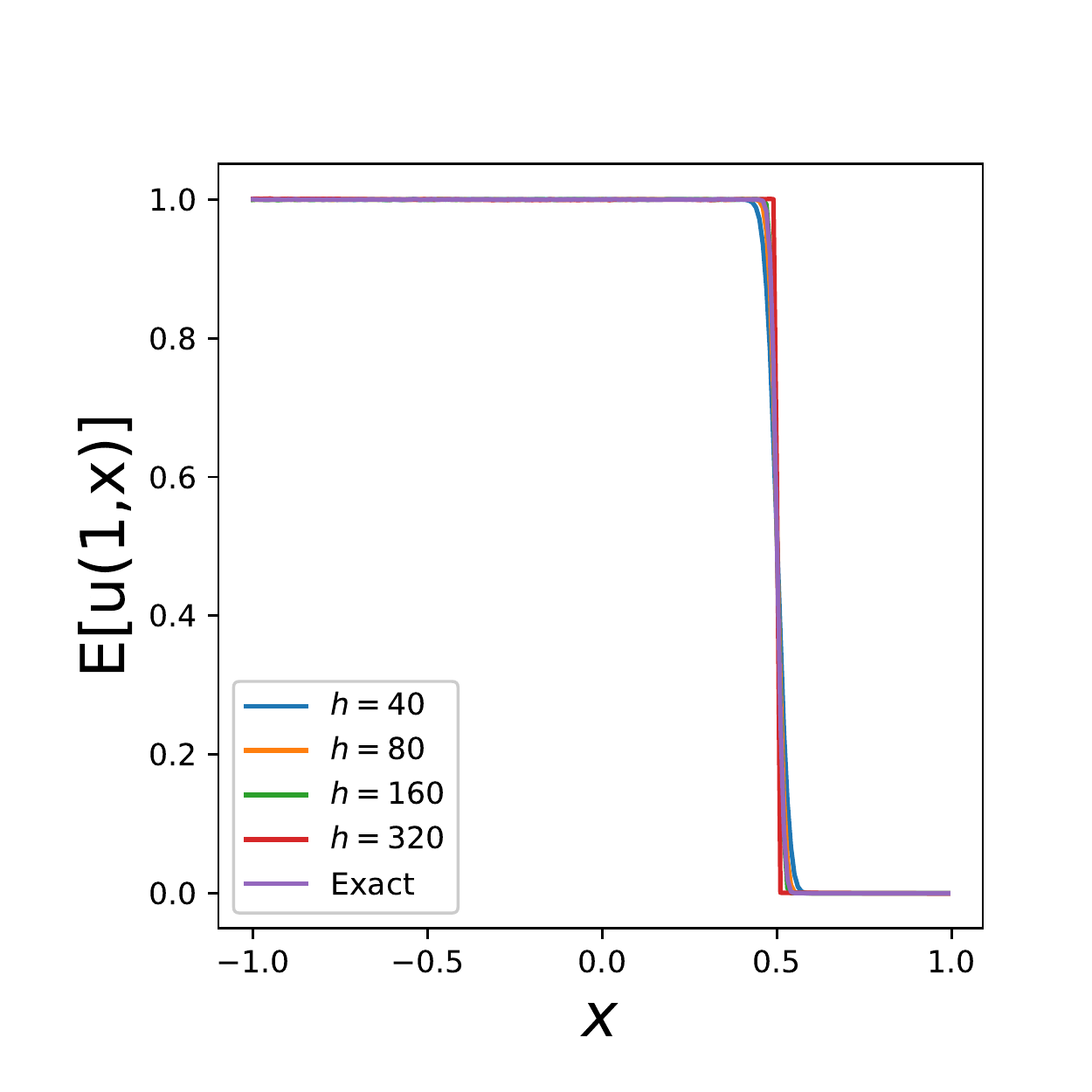}
	}
	\subfigure[$s=100, \epsilon = 0.005$]{
		\includegraphics[width=0.45\linewidth]{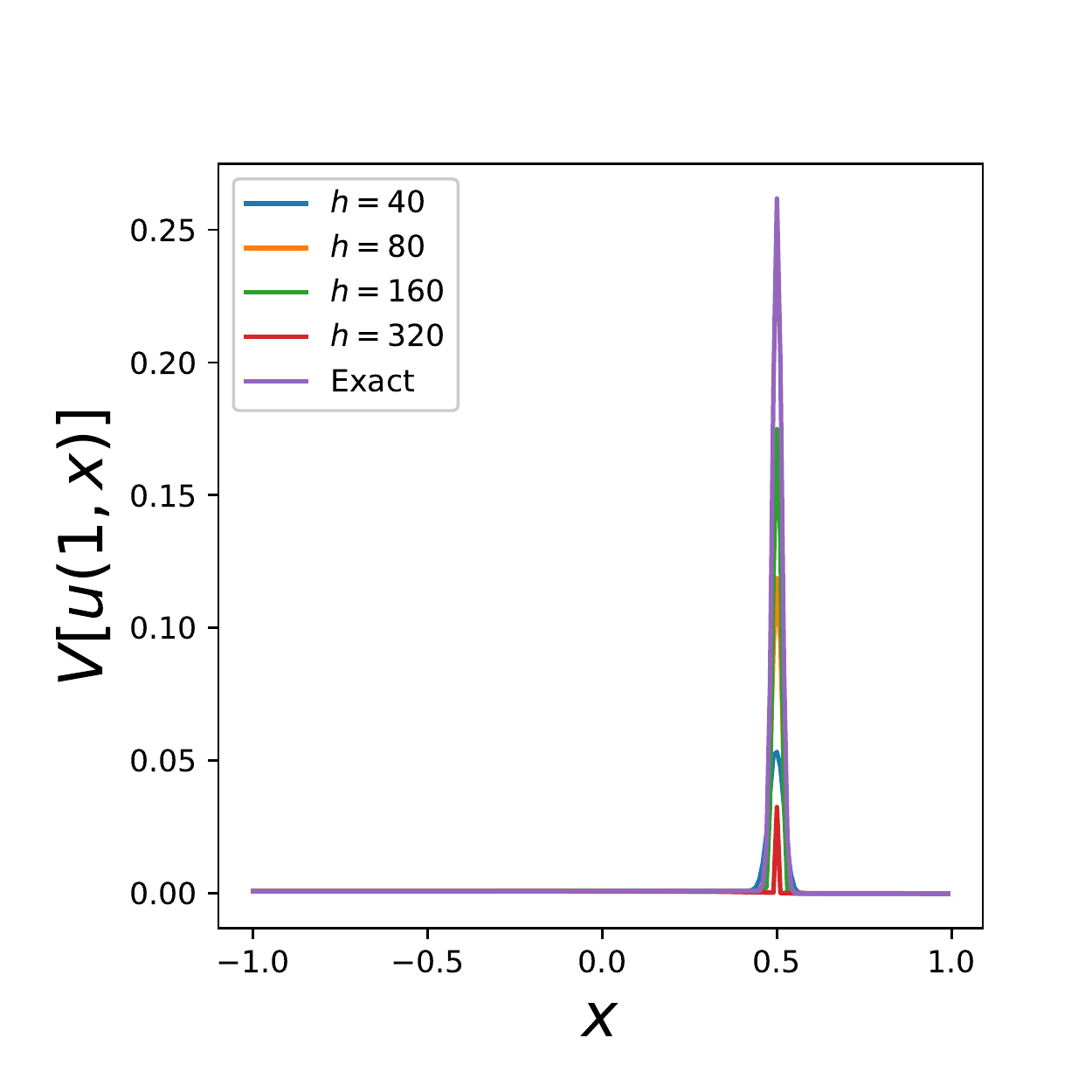}
	}
	\caption{1D solution profiles of the stochastic Burgers' equation.}
\label{fig:landscape of stochastic exceptation Bgs' eq 3}
\end{figure}
\begin{table}[htbp]
\centering
\begin{tabular}{|c|c|c|c|c|}
\hline
	$\epsilon$ & s & h & Expectation error ($L^2$) & Variance error ($L^2$)\\
		\hline
	0.25 & 2 & 1/40 &  1.00 e-2 & 5.42 e-1\\
	0.25 & 2 & 1/80 &  2.98 e-2 & 6.49 e-1\\
	0.1  & 5 & 1/40 &  1.48 e-2 & 2.23 e-1\\
	0.1  & 5 & 1/80 &  3.06 e-2 & 3.22 e-1\\
	0.05 & 10 & 1/40 & 8.16 e-3 & 2.75 e-1 \\
	0.05 & 10 & 1/80 & 2.24 e-2 & 4.34 e-1\\
	0.01 & 50 & 1/40 & 1.09 e-2 & 5.78 e-1\\
	0.01 & 50 & 1/80 & 1.90 e-2 & 5.86 e-1\\
	0.005 & 100 & 1/40 & 5.30 e-3 & 6.82 e-1 \\
	0.005 & 100 & 1/80 & 1.81 e-3 & 7.89 e-1\\
	0.0025 & 200 & 1/40 & 1.02 e-2 & 8.96 e-1 \\
	0.0025 & 200 & 1/80 & 1.57 e-2 & 9.92 e-1\\
	\hline
\end{tabular}
\caption{Expectation and variance errors of the proposed method for the stochastic Burgers' equation when the MC method is used with the batchsize $10000$.}
\label{tbl: s Bur equation 10000 eroor}
\end{table}
\begin{table}[htbp]
\centering
\begin{tabular}{|c|c|c|c|c|c|c|}
\hline
     \multirow{2}*{$\eps$} & \multirow{2}*{$s$}  &\multirow{2}*{$h$}   & \multicolumn{2}{c|}{$L^2$ error}            & \multicolumn{2}{c|}{$L^1$ error}  \\
     \cline{4-7}
     	~ &~ & ~ & Expectation & Variance & Expectation & Variance \\
		\hline
	0.25 & 2 & 1/40  &  2.44 e-3 & 1.27 e-1 & 1.63 e-03 & 8.66 e-02\\
	0.25 & 2 & 1/80  &  3.57 e-3 & 8.82 e-2 & 2.06 e-03 & 5.77 e-02\\
	0.25 & 2 & 1/160 &  9.70 e-3 & 1.14 e-1 & 4.42 e-03 & 7.40 e-02\\
	0.25 & 2 & 1/320 &  2.28 e-2 & 2.21 e-1 & 1.10 e-02 & 1.46 e-01\\
	0.1  & 5 & 1/40  &  4.16 e-3 & 2.30 e-1 & 2.03 e-03 & 1.60 e-01\\
	0.1  & 5 & 1/80  &  2.44 e-3 & 1.24 e-1 & 1.34 e-03 & 9.78 e-02\\
	0.1  & 5 & 1/160 &  4.34 e-3 & 9.16 e-2 & 2.17 e-03 & 8.04 e-02\\
	0.1  & 5 & 1/320 &  1.61 e-2 & 2.21 e-1 & 8.10 e-03 & 1.75 e-01\\
	0.05 & 10 & 1/40 & 6.79 e-3 & 3.37 e-1  & 2.99 e-03 & 2.40 e-01\\
	0.05 & 10 & 1/80 & 2.25 e-3 & 1.86 e-1  & 1.13 e-03 & 1.45 e-01\\
	0.05 & 10 & 1/160 & 4.68 e-3 & 1.27 e-1 & 2.28 e-03 & 1.17 e-01 \\
	0.05 & 10 & 1/320 & 2.01 e-2 & 3.36 e-1 & 8.94 e-03 & 2.74 e-01\\
	0.01 & 50 & 1/40 & 1.80 e-2 & 6.42 e-1  & 5.36 e-03 & 5.01 e-01\\
	0.01 & 50 & 1/80 & 5.74 e-3 & 4.04 e-1 & 1.67 e-03 & 3.32 e-01\\
	0.01 & 50 & 1/160 & 3.09 e-3 & 2.69 e-1 & 1.18 e-03 & 2.68 e-01\\
	0.01 & 50 & 1/320 & 4.40 e-2 & 9.12 e-1 & 1.70 e-02 & 8.06 e-01\\
	0.005 & 100 & 1/40 & 2.58 e-2 & 7.53 e-1 & 6.54 e-03 & 5.01 e-01\\
	0.005 & 100 & 1/80 & 8.64 e-3 & 5.25 e-1 & 2.09 e-03 & 3.32 e-01\\
	0.005 & 100 & 1/160 & 1.95 e-3 & 3.76 e-1& 7.22 e-04 & 2.68 e-01\\
	0.005 & 100 & 1/320 & 3.07 e-2 & 9.47 e-1& 5.65 e-03 & 8.06 e-01\\
	0.0025 & 200 & 1/40 & 2.58 e-2 & 7.53 e-1 & 7.56 e-03 & 7.52 e-01\\
	0.0025 & 200 & 1/80 & 8.64 e-3 & 5.25 e-1 & 2.51 e-03 & 5.68 e-01\\
	0.0025 & 200 & 1/160 & 1.95 e-3 & 3.76 e-1 & 8.30 e-04 & 5.51 e-01\\
	0.0025 & 200 & 1/320 & 3.07 e-2 & 9.47 e-1 & 3.52 e-03 & 9.09 e-01\\
	\hline
\end{tabular}
\caption{Expectation and variance errors of the proposed method for the stochastic Burgers' equation when the MC method is used with the batchsize $50000$.}
\label{tbl: s Bur equation 50000 eroor}
\end{table}
\begin{table}[htbp]
\centering
\begin{tabular}{|c|c|c|c|c|c|c|}
\hline
     \multirow{2}*{$\eps$} & \multirow{2}*{$s$}  &\multirow{2}*{$h$}   & \multicolumn{2}{c|}{$L^2$ error}            & \multicolumn{2}{c|}{$L^1$ error}  \\
     \cline{4-7}
     	~ &~ & ~ & Expectation & Variance & Expectation & Variance \\
		\hline
	0.25 & 2 & 1/80  &  3.88 e-3 & 8.06 e-2 & 2.17 e-03 & 8.06 e-02\\
	0.25 & 2 & 1/160 &  7.78 e-3 & 8.66 e-3 & 4.48 e-03 & 8.66 e-02\\
	0.25 & 2 & 1/320 &  2.68 e-2 & 2.44 e-1 & 1.72 e-02 & 2.44 e-01\\
	0.1  & 5 & 1/80  &  2.39 e-3 & 1.21 e-1 & 1.35 e-03 & 8.78 e-02\\
	0.1  & 5 & 1/160 &  3.18 e-3 & 7.39 e-2 & 2.24 e-03 & 6.29 e-02\\
	0.1  & 5 & 1/320 &  1.70 e-2 & 2.42 e-1 & 9.61 e-03 & 1.92 e-01\\
	0.05 & 10 & 1/80 &  1.80 e-3 & 1.88 e-1 & 1.14 e-03 & 1.45 e-01\\
	0.05 & 10 & 1/160 &  4.23 e-3 & 1.30 e-1 & 2.32 e-03 & 1.22 e-01 \\
	0.05 & 10 & 1/320 &  1.47 e-2 & 2.56 e-1 & 7.40 e-03 & 2.27 e-01\\
	0.01 & 50 & 1/80 & 5.88 e-3 & 4.01 e-1 & 1.79 e-03 & 3,25 e-01\\
	0.01 & 50 & 1/160 & 3.61 e-3 & 2.74 e-1 & 1.76 e-03 & 2.74 e-01\\
	0.01 & 50 & 1/320 & 2.19 e-2 & 5.37 e-1 & 5.47 e-03 & 5.04 e-01\\
	0.005 & 100 & 1/80 & 8.79 e-3 & 5.29 e-1 & 2.11 e-03 & 4.43 e-01\\
	0.005 & 100 & 1/160 & 2.78 e-3 & 3.47 e-1 & 1.21 e-03 & 3.49 e-01\\
	0.005 & 100 & 1/320 & 3.01 e-2 & 8.97 e-1 & 7.03 e-03 & 8.20 e-01\\
	\hline
\end{tabular}
\caption{Expectation and variance errors of the proposed method for the stochastic Burgers' equation when the quasi-Monte Carlo method is used with the batchsize $50000$.}
\label{tbl: s Bur equation 50000 quasi}
\end{table}
\begin{table}[htbp]
\centering
\begin{tabular}{|c|c|c|c|c|c|c|}
\hline
     \multirow{2}*{$\eps$} & \multirow{2}*{$s$}  &\multirow{2}*{$h$}   & \multicolumn{2}{c|}{$L^2$ error}            & \multicolumn{2}{c|}{$L^1$ error}  \\
     \cline{4-7}
     	~ &~ & ~ & Expectation & Variance & Expectation & Variance\\
		\hline
	0.01 & 50 & 1/80 & 5.82 e-3 & 4.03 e-1 & 2.11 e-03 & 3.27 e-01\\
	0.01 & 50 & 1/160 & 2.37 e-3 & 2.36 e-1 & 9.99 e-04 & 2.18 e-01\\
	0.01 & 50 & 1/320 & 8.37 e-3 & 2.76 e-1 & 3.42 e-03 & 2.80 e-01\\
	0.005 & 100 & 1/80 & 8.82 e-3 & 5.28 e-1 &	2.20 e-03 & 4.41 e-01\\
	0.005 & 100 & 1/160 & 2.07 e-3 & 3.53 e-1 & 7.66 e-04 & 3.51 e-01\\
	0.005 & 100 & 1/320 & 3.03 e-2 & 8.72 e-1 & 5.92 e-03 & 7.96 e-01\\
	\hline
\end{tabular}
\caption{Expectation and variance errors of the proposed method for the stochastic Burgers' equation when the multi-level MC method is used.}
\label{tbl: s Bur equation multi level}
\end{table}

\section{Conclusions}\label{sec:conclusion}

In this work, based on the weak formulation of PDEs, we propose a deep learning based discontinuous Galerkin method (D2GM) to solve (stochastic) conversation laws. The main idea is that at the
discrete level, the solution is smoother than that at the continuous level. By combining the advantages of discontinuous Galerkin method and deep neural networks, D2GM is able to solve problems with discontinuous solutions over the high-dimensional space. Convergence of the D2GM is proved under some assumptions. This method is tested for PDEs with non-smooth solutions over high-dimensional random space. Over some regime of mesh sizes, D2GM is found to be first-order and second-order accurate in practice. High-order schemes with discontinuous polynomial basis in space can be designed in the same manner. However, how to discretize the temporal derivative with high-order accuracy is unclear at the moment. For example, the leap-frog method is used together with the second-order scheme in space, but the overall second-order accuracy is not observed for the linear conservation law. Therefore, it will be of great interests to desgin high-order schemes for shock waves in the framework of deep neural networks. In summary, the proposed method shows a strong promise for solving high-dimensional uncertain PDEs with discontinuous solutions.

\medskip
\noindent \textbf{Acknowledgment.} This work of J. Chen was supported by National Key R\&D Program of China under grant No. 2018YFA0701700 and No. 2018YFA0701701 and NSFC grant 11971021. The work of S. Jin was supported by Natural Science Foundation of China under grant 12031013.

\bibliographystyle{siam}
\bibliography{D2GM}
\end{document}